\newtheorem{theorem}{Theorem}[section]
\newtheorem{lemma}[theorem]{Lemma}
\newtheorem{corollary}[theorem]{Corollary}
\newtheorem{definition}[theorem]{Definition}
\newtheorem{proposition}[theorem]{Proposition}
\newcommand{\qed}{\hfill $\Box$ }
\begin{document}

\title{\Large {\bf Outerplane bipartite graphs with isomorphic resonance graphs }}

\maketitle

\begin{center}
{\large \bf Simon Brezovnik$^{a,b}$, Zhongyuan Che$^{c}$,
Niko Tratnik$^{b,d}$,\\ Petra \v Zigert Pleter\v sek$^{d,e}$
}
\end{center}
\bigskip\bigskip

\baselineskip=0.20in

\smallskip

$^a$ {\it University of Ljubljana, Faculty of Mechanical Engineering, Slovenia} \\

$^b$ {\it Institute of Mathematics, Physics and Mechanics, Ljubljana, Slovenia} \\

$^c$ {\it Department of Mathematics, Penn State University, Beaver Campus, Monaca, USA} \\

$^d$ {\it University of Maribor, Faculty of Natural Sciences and Mathematics, Slovenia} \\
\medskip

$^e$ {\it University of Maribor, Faculty of Chemistry and Chemical Engineering, Slovenia}\\
\medskip

\begin{center}
{\tt simon.brezovnik@fs.uni-lj.si, zxc10@psu.edu,\\ niko.tratnik@um.si, petra.zigert@um.si}
\end{center}

%\begin{center} {\\ 
%\\
%\\}
%\end{center}

\begin{abstract}

We present novel results related to isomorphic resonance graphs of 2-connected outerplane bipartite graphs. 
As the main result, we provide a structure characterization 
for 2-connected outerplane bipartite graphs with isomorphic resonance graphs.
Moreover, two additional characterizations are  expressed in terms of resonance digraphs and via local structures of inner duals of 
2-connected outerplane bipartite graphs, respectively.

\vskip 0.2in
\noindent {\emph{keywords}}: isomorphic resonance graphs, 2-connected outerplane bipartite graph, resonance digraph, inner dual

\end{abstract}

\section{Introduction}

Resonance graphs reflect interactions between perfect matchings (in chemistry known as Kekul\' e structures) of plane bipartite graphs. 
These graphs were  independently introduced by   chemists 
 (El-Basil \cite{el-basil-93/1,el-basil-93/2},  Gr{\"u}ndler \cite{grundler-82}) and also by 
 mathematicians (Zhang, Guo, and Chen \cite{zhgu-88}) under the name 
 $Z$-transformation graph.  {Initially}, resonance graphs were investigated on hexagonal systems \cite{zhgu-88}. Later, this concept was generalized to  plane
  bipartite graphs, see \cite{Z06,ZZ97,ZZ00,ZZY04}. 
 
 In recent years, various structural   properties of resonance graphs of  (outer)plane bipartite  graphs were obtained \cite{C18,C19,C20,C21}. 
{ The problem of characterizing 2-connected outerplane bipartite graphs with isomorphic resonance graphs is interesting and nontrivial. 
There are outerplane bipartite graphs $G$ and $G'$ whose inner duals are isomorphic paths but with non-isomorphic resonance graphs. 
For example, let $G$ be a linear benzenoid chain (a chain in which every non-terminal hexagon is linear) with $n$ hexagons, 
and  let $G'$ be a fibonaccene (a benzenoid chain in which every non-terminal hexagon {is} angular, see \cite{KZ05})  with $n$ hexagons, where $n > 2$. Then the inner dual $T$ of graph $G$ is isomorphic to the inner dual $T'$ of graph $G'$, since $T$ and $T'$ are both paths on $n$ vertices.
However, their resonance graphs $R(G)$ and $R(G')$ are not isomorphic: $R(G)$ is a path and $R(G')$ is a Fibonacci cube, see Figure \ref{figure1}. 

\begin{figure}[h!] 
\begin{center}
\includegraphics[scale=0.8, trim=0cm 0.5cm 0cm 0cm]{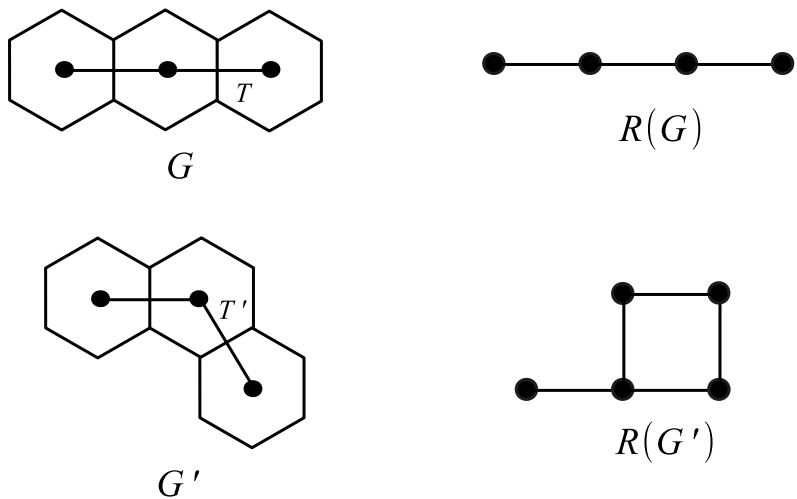}
\end{center}
\caption{\label{figure1} Resonance graphs of the linear benzenoid chain and fibonaccene with three hexagons.}
\end{figure}

In \cite{br-tr-zi,br-tr-zi-1} the problem of finding catacondensed even ring systems (shortly called CERS)  with isomorphic resonance graphs was investigated. 
More precisely, the relation of evenly homeomorphic CERS was introduced and it was proved that if two CERS are evenly homeomorphic, 
then  {their} resonance graphs are isomorphic. 
 Conversely, it is true for catacondensed even ring chains but not for all CERS \cite{br-tr-zi-1}. 
Moreover, in \cite{br-tr-zi-2} it was proved that if two 2-connected outerplane bipartite graphs are evenly homeomorphic,
 then  their resonance graphs are isomorphic. In papers \cite{br-tr-zi-1,br-tr-zi-2}, the following open problem was stated.
\bigskip
 
\noindent
\textbf{Problem 1.} \textit{Characterize 2-connected outerplane bipartite graphs  with isomorphic resonance graphs.}

In this paper we solve the above problem. Firstly, 
we state all the  {needed} definitions and previous results {as preliminaries}. 
The main result, Theorem \ref{glavni1}, is presented in Section \ref{sec3}. The necessity part of this result is stated as Theorem \ref{glavni}. Moreover, in Corollary \ref{cor1} we show that two 2-connected outerplane bipartite graphs have isomorphic resonance graphs if and only if they can be properly two colored so that their resonance digraphs are isomorphic. 
In addition, {by Corollary \ref{cor2} it} follows that 2-connected outerplane bipartite graphs $G$ and $G'$ have isomorphic resonance graphs if and only if there exists an isomorphism $\alpha$ between their inner duals $T$ and $T'$ such that for any 3-path $xyz$ of $T$, the triple $(x, y, z)$ is regular if and only if $(\alpha(x), \alpha(y), \alpha(z))$ is regular.

\section{Preliminaries}

 We say that two faces of a plane graph $G$ are \textit{adjacent} if they have an edge in common. 
 An \textit{inner face} { (also called a \textit{finite face})} adjacent to the \textit{outer face} {(also called the \textit{infinite face})}  is named a \textit{peripheral face}. 
 In addition, we denote the set of edges lying on some face $s$ of $G$ by $E(s)$. The subgraph induced by the edges in $E(s)$ is the \textit{periphery of $s$} and denoted by $\partial s$. The periphery of the outer face is also called the \textit{periphery of $G$} and denoted by $\partial G$. Moreover, for a  peripheral face $s$ and the outer face $s_0$, the subgraph induced by the edges in $E(s) \cap E(s_0)$ is called the {\em common periphery} of $s$ and $G$, and denoted by $\partial s \cap \partial G$. The vertices of $G$ that belong to the outer face are called \textit{peripheral vertices} and the remaining vertices are \textit{interior vertices}.  Furthermore, an \textit{outerplane graph} is a plane graph in which all vertices are peripheral vertices.
\smallskip

 A bipartite graph $G$ is \textit{elementary} if and only if it is connected and each edge is contained in some perfect matching of
$G$. Any elementary bipartite graph other than $K_2$ is 2-connected. 
Hence, if $G$ is a plane elementary bipartite graph with more than two vertices, then the periphery of each face of $G$ is an even cycle.
A peripheral face $s$ of a plane elementary bipartite graph $G$ is called \textit{reducible} if the subgraph $H$ of $G$ obtained by
removing all internal vertices (if exist) and edges on the common periphery of $s$ and $G$ is elementary. 
\smallskip

The \textit{inner dual} of a plane graph $G$ is a graph whose vertex set is the set of all inner faces of $G$, and two vertices being adjacent if the corresponding faces are adjacent.
\smallskip

A \textit{perfect matching} $M$ of a graph $G$ is a set of independent edges of $G$ such that every vertex of $G$ is incident with exactly one edge from $M$. An even cycle $C$ of $G$ is called \textit{$M$-alternating} if the edges of $C$ appear alternately in $M$ and in $E(G) \setminus M$. Also, a face $s$ of a 2-connected plane bipartite graph is \textit{$M$-resonant} if $\partial s$ is an $M$-alternating cycle.
\smallskip

Let $G$ be a plane elementary bipartite graph and $\mathcal{M}(G)$ be the set of all perfect matchings of $G$. 
Assume that $s$ is a reducible face of  $G$. 
By \cite{TV12},  the common periphery of $s$ and $G$ is an odd length path $P$. 
By Proposition 4.1 in \cite{C18},  $P$ is $M$-alternating for any perfect matching $M$ of $G$,
and $\mathcal{M}(G)=\mathcal{M}(G; P^{-}) \cup\mathcal{M}(G; P^{+})$,
where  $\mathcal{M}(G; P^{-})$ {is} the set of perfect matchings $M$ of $G$
such that two end edges of $P$ are not contained in $M$ or $P$ is a single edge and not contained in $M$;
$\mathcal{M}(G; P^{+})$ is the set of perfect matchings $M$ of $G$ such that two end edges of $P$ are contained in $M$
or $P$ is a single edge and contained in $M$.
Furthermore, $\mathcal{M}(G; P^{-})$ and $\mathcal{M}(G; P^{+})$ can be partitioned as  
\begin{eqnarray*}
\mathcal{M}(G; P^{-})&=&\mathcal{M}(G; P^{-}, \partial s) \cup\mathcal{M}(G; P^{-}, \overline{\partial s})\\
\mathcal{M}(G; P^{+})&=&\mathcal{M}(G; P^{+}, \partial s) \cup\mathcal{M}(G; P^{+}, \overline{\partial s})
\end{eqnarray*}
where $\mathcal{M}(G; P^{-}, \partial s)$ (resp., $\mathcal{M}(G; P^{-}, \overline{\partial s})$) 
is the set of perfect matchings $M$ in $\mathcal{M}(G; P^{-})$ such that $s$ is $M$-resonant (resp., not $M$-resonant),
and $\mathcal{M}(G; P^{+}, \partial s)$ (resp., $\mathcal{M}(G; P^{+}, \overline{\partial s})$) 
is the set of perfect matchings $M$ in $\mathcal{M}(G; P^{+})$ such that $s$ is $M$-resonant (resp., not $M$-resonant).
\smallskip

Let $G$ be a  plane bipartite graph with a perfect matching.  The {\em resonance graph} (also called \textit{$Z$-transformation graph}) $R(G)$ of $G$ is the graph whose vertices are the  perfect matchings of $G$, and two perfect matchings $M_1,M_2$ are adjacent whenever their symmetric difference forms the edge set of exactly one inner face $s$ of $G$. In this case, we say that the edge $M_1M_2$ has the \textit{face-label} $s$.

{Let $H$ and $K$ be two graphs with vertex sets $V(H)$ and $V(K)$, respectively.
The Cartesian product of $H$ and $K$  is a graph with the vertex set $\{(h,k) \mid h \in V(H), k \in V(K)\}$
such that two vertices $(h_1,k_1)$ and $(h_2,k_2)$ are adjacent if either $h_1h_2$ is an edge of 
$H$ and $k_1=k_2$ in $K$ or $k_1k_2$ is an edge of $K$ and $h_1=h_2$ in $H$.
Assume that $G$ is a disjoint union of two plane bipartite graphs $G_1$ and $G_2$. 
Then by definitions, the resonance graph $R(G)$
is the Cartesian product of $R(G_1)$ and $R(G_2)$.}
\smallskip

Assume that $G$ is a plane bipartite graph whose vertices are properly colored black and white 
such that adjacent vertices receive different colors. Let $M$ be a perfect matching of $G$.
An $M$-alternating cycle $C$ of $G$ is \textit{$M$-proper} (resp., \textit{$M$-improper}) 
if every edge of $C$ belonging to $M$ goes from white to black vertex (resp., from black to white vertex) along the clockwise orientation of $C$.
A plane elementary bipartite graph $G$ with a perfect matching has a unique perfect matching $M_{\hat{0}}$ (resp., $M_{\hat{1}}$)
such that $G$ has no proper $M_{\hat{0}}$-alternating cycles (resp., no improper $M_{\hat{1}}$-alternating cycles) \cite{ZZ97}.

The  \textit{resonance digraph}, denoted by $\overrightarrow{R}(G)$,
is the digraph obtained from $R(G)$ by adding a direction for each edge  so that
$\overrightarrow{M_1M_2}$ is a directed edge from $M_1$ to $M_2$ if $M_1 \oplus M_2$ 
is a proper $M_1$-alternating (or, an improper $M_2$-alternating) cycle surrounding an inner face of $G$.
Let $\mathcal{M}(G)$ be the set of all perfect matchings of $G$.
Then a partial order $\le$ can be defined on $\mathcal{M}(G)$ such that $M' \le M$
if there is a directed path from $M$ to $M'$ in $\overrightarrow{R}(G)$. 
{When $G$ is a plane elementary bipartite graph,  $\mathcal{M}(G)$
is a finite distributive lattice whose Hasse diagram is isomorphic to $\overrightarrow{R}(G)$ \cite{LZ03}.}
It is well known that $M_{\hat{0}}$ is the minimum and $M_{\hat{1}}$ the maximum of the distributive lattice  $\mathcal{M}(G)$ \cite{LZ03,ZLS08}.
\smallskip

We now present the concept of a reducible face decomposition, see \cite{ZZ00} and \cite{C18,C19}. Firstly, we introduce the \textit{bipartite
ear decomposition} of a plane elementary bipartite graph $G$ with $n$ inner faces.  
Starting from an edge $e$ of $G$, join its two end vertices by a path $P_1$ of odd length and proceed inductively to build a sequence of bipartite
graphs as follows. If $G_{i-1} = e + P_1 + \cdots + P_{i-1}$ has already been constructed, add the $i$th ear $P_i$ of odd length by joining any
two vertices belonging to different  bipartition sets of $G_{i-1}$ such that $P_i$ has no internal vertices in common with the vertices of $G_{i-1}$. A bipartite
ear decomposition of a plane elementary bipartite graph $G$ is called a \textit{reducible face decomposition} (shortly $RFD$) if $G_1$ is a
periphery of an inner face $s_1$ of $G$, and the $i$th ear $P_i$ lies in the exterior of $G_{i-1}$ such that $P_i$ and a part of the periphery of $G_{i-1}$
surround an inner face $s_i$ of $G$ for all $i \in \{ 2, \ldots, n \}$. For such a decomposition, we use notation $RFD(G_1, G_2, \ldots, G_n)$, where $G_n=G$.
{It was shown \cite{ZZ00} that a plane bipartite graph with more than two vertices is elementary if and only if it has a reducible face decomposition.}
\smallskip

Let $H$ be a convex subgraph of a graph $G$. The \textit{peripheral convex expansion} of $G$ with respect to $H$,
denoted by $pce(G;H)$, is the graph obtained from $G$ by the following procedure:
\begin{enumerate}
\item [$(i)$] Replace each vertex $v$ of $H$ by an edge $v_1v_2$.
\item [$(ii)$] Insert edges between $v_1$ and the neighbours of $v$ in $V(G) \setminus V(H)$.
\item [$(iii)$] Insert the edges $u_1v_1$ and $u_2v_2$ whenever $u,v$ of $H$ are adjacent in $G$.
\end{enumerate}

Two edges $uv$ and $xy$ of a connected graph $G$ are said to be in \textit{relation $\Theta$} (also known as \textit{Djokovi\' c-Winkler relation}), 
denoted by $uv \Theta xy$, if $d_G(u, x) + d_G(v, y) \neq  d_G(u, y) + d_G(v, x)$. 
It is well known that if $G$ is a plane elementary bipartite graph, then its resonance graph $R(G)$ is a median graph \cite{ZLS08} and therefore, 
the relation $\Theta$  is an equivalence relation on the set of edges $E(R(G))$.

Let $xy$ be an edge of a resonance graph $R(G)$ and
$F_{xy}=\{e \in E(R(G)) \mid e \Theta xy\}$ be the set of all edges in relation $\Theta$ with $xy$ in $R(G)$, where $G$ is a plane elementary bipartite graph.  
By Proposition 3.2 in \cite{C18}, all edges in $F_{xy}$ have the same face-label.
On the other hand, two edges with the same face-label can be in different $\Theta$-classes of $R(G)$.

\smallskip

We now present several results from previous papers which will be needed later.

\begin{proposition} \cite{TV12} \label{pro-ves} 
Let $G$ be a plane elementary bipartite graph other than $K_2$. Then the outer cycle of $G$ is improper $M_{\hat{0}}$-alternating 
as well as proper $M_{\hat{1}}$-alternating, where  $M_{\hat{0}}$ is the minimum and $M_{\hat{1}}$ the maximum in the finite distributive lattice  $\mathcal{M}(G)$.
\end{proposition}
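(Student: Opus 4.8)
The plan is to separate the statement into an easy symmetric part and one genuinely combinatorial core. First, observe that swapping the black and white color classes is a proper recoloring of $G$ that fixes $\partial G$ but turns every proper $M$-alternating cycle into an improper one and vice versa; by the defining property of the extremal matchings in \cite{ZZ97} it therefore interchanges the names $M_{\hat 0}$ and $M_{\hat 1}$. Consequently the second claim (that $\partial G$ is proper $M_{\hat 1}$-alternating) is exactly the first claim (that $\partial G$ is improper $M_{\hat 0}$-alternating) read in the swapped coloring, so it suffices to prove the first. Next, reading the two colors around any cycle shows that an $M$-alternating cycle is uniformly proper or uniformly improper. Hence once we know that $\partial G$ is $M_{\hat 0}$-alternating at all, it cannot be proper---a proper $M_{\hat 0}$-alternating cycle is forbidden by the definition of $M_{\hat 0}$---and so it is automatically improper. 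Thus the whole proposition reduces to the single assertion that $\partial G$ is an $M_{\hat 0}$-alternating cycle.

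I would prove this by induction on the number $n$ of inner faces of $G$. For $n=1$ the graph $G$ coincides with $\partial G$, a single even cycle carrying exactly two perfect matchings; the one without a proper alternating cycle is $M_{\hat 0}$, and for it the cycle $\partial G$ is (improper) alternating, settling the base case. For the inductive step I would pick a reducible peripheral face $s$ (its existence is guaranteed by the reducible face decomposition of \cite{ZZ00}), write $P=\partial s\cap\partial G$ for its common periphery, which is an odd path by \cite{TV12}, and let $H$ be the reduced elementary graph obtained from $s$; it has $n-1$ inner faces and boundary $\partial H=(\partial G\setminus P)\cup Q$, where $Q=\partial s\setminus P$ is the complementary arc of $\partial s$. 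By Proposition 4.1 in \cite{C18} the path $P$ is $M$-alternating for every perfect matching $M$ of $G$, and $\mathcal{M}(G)=\mathcal{M}(G;P^{-})\cup\mathcal{M}(G;P^{+})$. The inductive hypothesis applied to $H$ gives that $\partial H$, and in particular the arc $Q$, is $M_{\hat 0}^{H}$-alternating. The idea is then to transport $M_{\hat 0}$ across the reduction using the order-preserving correspondence between $\mathcal{M}(G)$ and $\mathcal{M}(H)$ that underlies the peripheral convex expansion, identify the image of $M_{\hat 0}^{G}$ with $M_{\hat 0}^{H}$, and splice the two pieces of boundary information: since $P$ is $M_{\hat 0}^{G}$-alternating and the remaining part of $\partial G$ coincides with a part of $\partial H$ that is $M_{\hat 0}^{H}$-alternating, the cycle $\partial G=(\partial H\setminus Q)\cup P$ should again be $M_{\hat 0}^{G}$-alternating.

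The hard part will be the bookkeeping at the two endpoints of $P$, where the arc $P$ is glued back onto the rest of $\partial G$. At such an attachment vertex $a$ the boundary of $G$ uses one edge of $P$ and one edge shared with $\partial H$, so alternation of $\partial G$ at $a$ forces a definite membership pattern; matching this against the alternation of $\partial H$ at $a$ requires showing that in $M_{\hat 0}^{G}$ the vertex $a$ is matched on the side away from $s$, equivalently that $M_{\hat 0}^{G}$ lies in $\mathcal{M}(G;P^{-})$ (with the companion case $M_{\hat 0}^{G}\in\mathcal{M}(G;P^{+})$, where $s$ is improper $M_{\hat 0}$-resonant, handled by the corresponding reduction). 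This is exactly where the \emph{global} property of $M_{\hat 0}$---that $G$ has no proper $M_{\hat 0}$-alternating cycle whatsoever, not merely no proper alternating face---must be invoked: a wrong configuration at $a$ would let one close up $P$ together with an alternating path in $H$ into a proper $M_{\hat 0}$-alternating cycle, contradicting minimality of $M_{\hat 0}$ in the distributive lattice $\mathcal{M}(G)$ \cite{LZ03,ZLS08}. I expect this planar forcing argument at the attachment vertices, rather than the splicing itself, to be the crux of the proof.
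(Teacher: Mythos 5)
First, a point of reference: the paper never proves Proposition \ref{pro-ves}; it is imported from \cite{TV12} as a known result, so your attempt must be judged on its own merits rather than against an in-paper argument. Your preliminary reductions are correct and cleanly done: swapping the two colour classes exchanges proper with improper alternation (and reverses every arc of $\overrightarrow{R}(G)$, hence exchanges $M_{\hat{0}}$ and $M_{\hat{1}}$), and since colours alternate around any cycle, an $M$-alternating cycle is uniformly proper or uniformly improper. So the proposition does reduce to the assertion that $\partial G$ is $M_{\hat{0}}$-alternating, your base case is fine, and induction over a reducible face decomposition is a viable skeleton.

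The inductive step, however, is a plan rather than a proof, and what you defer (``I expect this planar forcing argument \dots to be the crux'') is exactly the mathematical content. Three concrete gaps. (1) The machinery you invoke does not exist at this generality: there is no order-preserving correspondence between all of $\mathcal{M}(G)$ and $\mathcal{M}(H)$, and $R(G)$ need not be a peripheral convex expansion of $R(H)$; Theorem \ref{T:MedianZ(G)} gives a correspondence only between $\mathcal{M}(H)$ and the part $\mathcal{M}(G;P^{-})$, and the pce property holds if and only if $\mathcal{M}(G;P^{+})=\mathcal{M}(G;P^{+},\partial s)$, which is automatic for outerplane graphs (Theorem \ref{th0}) but not in general. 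Deciding on which side of this dichotomy $M_{\hat{0}}^{G}$ falls is precisely the open issue, and Lemma \ref{grozna}$(ii)$ shows that $M_{\hat{0}}^{G}\in\mathcal{M}(G;P^{+})$ genuinely occurs. (2) Even granting $M_{\hat{0}}^{G}\in\mathcal{M}(G;P^{-})$ (in which case its restriction to $H$ is easily seen to equal $M_{\hat{0}}^{H}$, since a proper alternating cycle of $H$ would be one of $G$), the splice fails exactly when the end edges of $Q:=\partial s\setminus P$ lie in $M_{\hat{0}}^{H}$, and ruling this out is not a routine appeal to minimality: in that bad configuration $\partial s=P\cup Q$ is indeed $M_{\hat{0}}^{G}$-alternating, but an \emph{improper} alternating cycle is perfectly compatible with the definition of $M_{\hat{0}}$ (the cycle $\partial G$ itself is one). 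What is needed, and absent, is the orientation computation showing $\partial s$ is \emph{proper}: an edge of $Q$ has the face $s$ on one side and the interior of $H$ on the other, so the clockwise orientations of $\partial s$ and of $\partial H$ traverse it in opposite senses; hence a matching edge of $Q$, improper for $\partial H$ by the induction hypothesis, is proper for $\partial s$, and uniformity then makes $\partial s$ a proper $M_{\hat{0}}^{G}$-alternating cycle, the desired contradiction. (3) The companion case $M_{\hat{0}}^{G}\in\mathcal{M}(G;P^{+})$ is dismissed with ``handled by the corresponding reduction,'' but there you must first prove your parenthetical claim that $s$ is $M_{\hat{0}}$-resonant, which is automatic only when $Q$ is a single edge (the outerplane situation) and needs an argument in general, and you must then still identify $\bigl(M_{\hat{0}}^{G}\oplus\partial s\bigr)\cap E(H)$ with $M_{\hat{0}}^{H}$ before the induction hypothesis applies.

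For what it is worth, the gaps are fillable, but a shorter complete proof avoids the induction altogether: the outer face of a plane elementary bipartite graph is resonant (cf.\ \cite{ZZ00}), so choose $M$ with $\partial G$ improper $M$-alternating; any proper $M$-alternating inner face must be vertex-disjoint from $\partial G$ (at a shared vertex the unique matching edge would lie on both alternating cycles and receive incompatible orientations), so repeatedly flipping proper $M$-alternating faces --- possible while $M\neq M_{\hat{0}}$ and terminating, by \cite{ZZ97,LZ03} --- preserves the improper alternation of $\partial G$ and ends at $M_{\hat{0}}$.
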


The induced subgraph of a graph $G$ on $W \subseteq V(G)$  will be denoted as $\langle W \rangle$.
\begin{theorem}\label{T:MedianZ(G)}\cite{C18}
Assume that $G$ is a plane elementary bipartite graph and $s$ is a reducible face of $G$. 
Let $P$ be the common periphery of $s$ and $G$.
Let $H$ be the subgraph of $G$ obtained by removing all internal vertices and edges of $P$.
Assume that $R(G)$ and $R(H)$ are resonance graphs of $G$ and $H$ respectively. 
Let $F$ be the set of all edges in $R(G)$ with the face-label $s$.
Then $F$ is a $\Theta$-class of $R(G)$ and $R(G)-F$ has exactly two components
$\langle \mathcal{M}(G; P^{-}) \rangle$ and $\langle \mathcal{M}(G; P^{+}) \rangle$.
Furthermore, 

(i) $F$ is a matching defining an isomorphism between 
$\langle \mathcal{M}(G; P^{-}, \partial s) \rangle$ and $\langle \mathcal{M}(G; P^{+}, \partial s) \rangle$;
 
(ii) $\langle \mathcal{M}(G; P^{-}, \partial s) \rangle$ is convex in $\langle \mathcal{M}(G; P^{-}) \rangle$, 
$\langle \mathcal{M}(G; P^{+}, \partial s) \rangle$ is convex in $\langle \mathcal{M}(G; P^{+}) \rangle$;

(iii) $\langle \mathcal{M}(G; P^{-}) \rangle$ and $\langle \mathcal{M}(G; P^{+}) \rangle$ are median graphs, 
where $\langle \mathcal{M}(G; P^{-}) \rangle \cong R(H)$.

In particular, $R(G)$ can be obtained from $R(H)$ by a peripheral convex expansion 
if and only if $\mathcal{M}(G; P^{+})=\mathcal{M}(G; P^{+}, \partial s)$.
\end{theorem}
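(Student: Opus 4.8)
The plan is to exploit the fact, recalled above, that $R(G)$ is a median graph and hence a partial cube, together with the distributive lattice structure of $\mathcal{M}(G)$ whose Hasse diagram is $\overrightarrow{R}(G)$. First I would pin down the combinatorial meaning of $F$. Because $P=\partial s\cap\partial G$ has odd length and is $M$-alternating for every perfect matching $M$, its two end edges are simultaneously in or out of $M$; this is exactly the partition $\mathcal{M}(G)=\mathcal{M}(G;P^{-})\cup\mathcal{M}(G;P^{+})$. The two end edges of $P$ lie on the outer face, and the only inner face containing either of them is $s$ itself. Hence an edge $M_1M_2$ of $R(G)$ toggles membership of the end edges of $P$ (i.e.\ joins $\mathcal{M}(G;P^{-})$ to $\mathcal{M}(G;P^{+})$) if and only if $M_1\oplus M_2=E(\partial s)$, that is, if and only if it carries the face-label $s$. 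Thus $F$ is precisely the set of edges of $R(G)$ joining $\langle\mathcal{M}(G;P^{-})\rangle$ to $\langle\mathcal{M}(G;P^{+})\rangle$, and the flip $\phi(M)=M\oplus E(\partial s)$, defined on the matchings for which $s$ is resonant, realizes this as a matching between $\mathcal{M}(G;P^{-},\partial s)$ and $\mathcal{M}(G;P^{+},\partial s)$. Statement (i) then follows at once: if $M,M'$ are joined inside $\langle\mathcal{M}(G;P^{-},\partial s)\rangle$ by an edge with face-label $t$ (necessarily $t\neq s$), then $\phi(M)\oplus\phi(M')=M\oplus M'=E(t)$, so $\phi$ preserves adjacency and all face-labels other than $s$, giving the asserted isomorphism.

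The core of the argument is to show that $F$ is a single $\Theta$-class and that $R(G)-F$ has exactly the two components $\langle\mathcal{M}(G;P^{-})\rangle$ and $\langle\mathcal{M}(G;P^{+})\rangle$. My route is through the lattice: I would prove that $\mathcal{M}(G;P^{-})$ is an order ideal (down-set) of $\mathcal{M}(G)$ and $\mathcal{M}(G;P^{+})$ the complementary filter. Granting this, both sets are connected in the Hasse diagram $\overrightarrow{R}(G)$ — for any $x,y$ in an ideal one descends from $x$ to $x\wedge y$ and ascends to $y$, staying in the ideal throughout — so $R(G)-F$ has exactly two components. Since every $\Theta$-class carries a single face-label, $F$ is a union of $\Theta$-classes; being a partial cube, the removal of $F$ could then create only two components if $F$ is a single $\Theta$-class, because any additional class inside $F$ would split one of the two halves and produce a third component. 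Once $F$ is known to be one $\Theta$-class, the standard median-graph theory supplies the rest: the two halves are convex, the boundary vertices incident to $F$ on each side form convex subgraphs matched isomorphically by $F$ (this yields (i) together with the convexity in (ii)), and convex subgraphs of a median graph are again median, which is (iii).

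To identify $\langle\mathcal{M}(G;P^{-})\rangle$ with $R(H)$ I would use the restriction map. For $M\in\mathcal{M}(G;P^{-})$ the odd path $P$ forces its interior vertices to be matched among themselves along $P$, so deleting those forced edges yields $M\cap E(H)$, a perfect matching of $H$; this is a bijection $\mathcal{M}(G;P^{-})\to\mathcal{M}(H)$, and since the inner faces of $H$ are exactly the inner faces of $G$ other than $s$, it preserves all relevant face-labels and gives $\langle\mathcal{M}(G;P^{-})\rangle\cong R(H)$. Here the reducibility of $s$ enters decisively, guaranteeing that $H$ is elementary so that $R(H)$ is defined and connected. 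Finally, for the \emph{in particular} clause I would compare with the definition of $pce$: a peripheral convex expansion doubles exactly the vertices of the convex subgraph $\mathcal{M}(G;P^{-},\partial s)$ of $R(H)$ and attaches their $\phi$-images, so $R(G)$ is obtained from $R(H)$ by such an expansion precisely when the whole $P^{+}$ side consists of these images, that is, when $\mathcal{M}(G;P^{+})=\mathcal{M}(G;P^{+},\partial s)$.

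The step I expect to be the main obstacle is proving that $\mathcal{M}(G;P^{-})$ is an order ideal — equivalently, that every covering relation carrying the face-label $s$ is directed the same way relative to the partition, with its lower endpoint in $\mathcal{M}(G;P^{-})$ and its upper endpoint in $\mathcal{M}(G;P^{+})$. This is a local orientation computation: a cover $M'\lessdot M$ with label $s$ means $\partial s$ is proper $M'$-alternating, and one must check that proper alternation of $\partial s$, read against the clockwise orientation of the outer face along $P$ (which is opposite to the clockwise orientation of $s$ along $P$), forces the end edges of $P$ to lie outside $M'$. The black/white colouring and Proposition \ref{pro-ves}, which fixes the alternation type of the outer cycle at the extrema $M_{\hat{0}}$ and $M_{\hat{1}}$, are the tools for this parity bookkeeping; getting the orientation convention exactly right, uniformly over all such covers, is the delicate point on which the whole decomposition rests.
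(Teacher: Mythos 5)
The paper itself contains no proof of this statement: Theorem \ref{T:MedianZ(G)} is imported verbatim from \cite{C18} as a preliminary, so there is nothing in-text to compare you against line by line. Judged on its own terms, your plan is sound and essentially complete, and it follows the natural route through the lattice structure. Your identification of $F$ is correct (the end edges of $P$ lie on no inner face other than $s$, so an edge of $R(G)$ crosses the partition $\mathcal{M}(G;P^{-})\cup\mathcal{M}(G;P^{+})$ exactly when its face-label is $s$); the deduction ``one side is an ideal, the other a filter, hence both are connected in the Hasse diagram, $F$ is a union of $\Theta$-classes by face-label invariance, and a union of $k\ge 2$ classes would leave at least $k+1$ components'' is valid and forces $F$ to be a single $\Theta$-class; the flip $M\mapsto M\oplus E(\partial s)$ gives (i); standard median-graph theory (convexity of half-spaces and of the boundary sets $U$, convex subgraphs of median graphs being median) gives (ii) and (iii); and the restriction bijection $\mathcal{M}(G;P^{-})\to\mathcal{M}(H)$ together with the definition of $pce$ correctly handles $\langle\mathcal{M}(G;P^{-})\rangle\cong R(H)$ and the final equivalence.

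Two remarks on the step you flagged as the main obstacle: it is in fact easier than you fear, and your formulation of it contains two small slips. Every edge of $F$ has symmetric difference equal to the \emph{same} fixed cycle $\partial s$, and since colors alternate around $\partial s$, one of its two half-matchings consists entirely of edges going white to black in the clockwise orientation and the other entirely of edges going black to white. Hence whether $\partial s$ is proper or improper $M$-alternating depends only on which half lies in $M$, and that datum is precisely whether the end edges of $P$ lie in $M$ or not; so all $F$-edges are oriented the same way across the partition, with no further case analysis over covers. The slips: first, with the paper's conventions ($\overrightarrow{M_1M_2}$ directed from $M_1$ to $M_2$ when $M_1\oplus M_2$ is proper $M_1$-alternating, and $M'\le M$ when there is a directed path from $M$ to $M'$), the \emph{lower} endpoint of a cover with label $s$ is the one for which $\partial s$ is \emph{improper}-alternating, not proper as you wrote; second, which of $\mathcal{M}(G;P^{-})$, $\mathcal{M}(G;P^{+})$ is the down-set depends on the chosen proper $2$-coloring, so you should not insist that the lower endpoint always lies in $\mathcal{M}(G;P^{-})$ --- uniformity of the direction is all that the connectivity argument requires, and it is exactly what the computation above delivers.
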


\begin{proposition}\label{P:OuterPlaneReducibleFace}\cite{C19}
Let $G$ be a 2-connected outerplane bipartite  graph.
Assume that $s$ is a reducible face of $G$. Then $s$ is adjacent to exactly one inner face of $G$.
\end{proposition}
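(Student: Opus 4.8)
The plan is to reduce the statement to a purely combinatorial count on the boundary of $s$ and then rule out the bad case by a connectivity argument. Concretely, I would show that the number of inner faces adjacent to $s$ equals the number of internal edges (chords) of $G$ lying on $\partial s$, and then prove that if this number were at least $2$, the reduced graph $H$ could not be $2$-connected — contradicting that $H$ is elementary, since every elementary bipartite graph other than $K_2$ is $2$-connected.

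First I would record the structure of $\partial s$. As $G$ is elementary with more than two vertices, $\partial s$ is an even cycle. Since $s$ is reducible, by \cite{TV12} the common periphery $P=\partial s\cap\partial G$ is an odd-length path; write its endpoints as $a$ and $b$. The edges of $\partial s$ not lying on $\partial G$ then form the complementary path $Q$ from $a$ to $b$, say $Q=v_0v_1\cdots v_k$ with $v_0=a$, $v_k=b$, where each edge of $Q$ is an internal edge of $G$ and hence is shared by $s$ and exactly one other inner face. Because $G$ is outerplane, two inner faces cannot share more than one edge (an interior vertex of a shared path would have both incident path-edges internal, so it could not lie on $\partial G$). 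Thus distinct chords of $\partial s$ border distinct faces, the number of inner faces adjacent to $s$ equals $k=|E(Q)|$, and it suffices to prove $k=1$.

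Next I would assume for contradiction that $k\ge 2$, so $Q$ has an internal vertex $v_1$ incident to the two chords $av_1$ and $v_1v_2$. Here I would invoke the separating property of chords in a $2$-connected outerplane graph: the chord $av_1$ together with the arc of $\partial G$ from $a$ to $v_1$ not containing $P$ bounds a subregion $X$ whose vertices are attached to the rest of $G$ only through $a$ and $v_1$, and $X$ is nonempty since $av_1\notin E(\partial G)$ forces that arc to have length at least two. Hence $\{a,v_1\}$ is a $2$-cut separating $X$ from the remaining set $Y$, which contains $v_2,\ldots,v_k=b$ and all of $P$. Inspecting the rotation at $a$, its only edge into $Y$ is $ap_1\in P$, because $av_1$ lies next to $ap_1$ around the face $s$ and every further chord at $a$ opens into the $X$-side of $av_1$. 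Consequently, after deleting the interior vertices and edges of $P$ to form $H$, the vertex $a$ has no neighbour in $Y$; deleting $v_1$ from $H$ then severs the last link between $X\cup\{a\}$ and $Y$, so $H-v_1$ is disconnected. Thus $v_1$ is a cut vertex of $H$, contradicting the $2$-connectivity of the elementary graph $H$, and therefore $k=1$.

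The main obstacle is the third paragraph: turning the planar picture into a rigorous claim that the chord $av_1$ produces a genuine $2$-cut $\{a,v_1\}$ with both sides nonempty, and that deleting $P$ removes $a$'s only neighbour in $Y$ so that $v_1$ becomes an honest cut vertex of $H$. This is where the hypotheses outerplane and $2$-connected are essential, and it must be argued carefully from the embedding; the parity bookkeeping on $\partial s$ and the face/chord correspondence are then routine.
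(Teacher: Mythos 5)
The paper never proves this proposition itself: it is imported verbatim from reference \cite{C19} as a preliminary, so there is no in-paper proof to compare yours against. Judged on its own, your argument is correct, and it deploys the hypotheses exactly where they are needed: outerplanarity makes every edge of $Q=\partial s-E(P)$ a chord of the outer cycle; a chord incident to an internal vertex $v_1$ of $Q$ cuts off a nonempty region $X$ attached to the rest of $G$ only at $\{a,v_1\}$; deleting $P$ removes the unique edge from $a$ into $Y$; and then $v_1$ is a cut vertex of $H$, contradicting that the elementary graph $H$ must be $2$-connected. This ``reduce to a cut vertex of $H$ and contradict elementarity'' mechanism is the natural line of attack and, as far as the cited source goes, essentially the intended one.

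Two points should be tightened. First, your parenthetical justification that two inner faces of an outerplane graph share at most one edge is both incomplete and unnecessary. It is incomplete because it only addresses shared edges that are adjacent, and even there the stated reason is too quick: having both incident path-edges internal does not by itself force a vertex off $\partial G$ (your own $v_1$ has both its $Q$-edges internal yet lies on $\partial G$); the correct argument is that an interior vertex of a path shared by two inner faces has degree $2$ with both incident faces inner. Shared \emph{non-adjacent} edges would need a separate Jordan-curve argument. It is unnecessary because your proof never uses this injectivity: once $k\ge 2$ is ruled out, $k=1$ immediately yields exactly one adjacent inner face, since the unique chord of $\partial s$ lies on exactly one inner face other than $s$. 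Second, to invoke ``elementary and not $K_2$ implies $2$-connected'' you should record explicitly that $V(H)\supseteq\{a,v_1,b\}\cup V(X)$, so $H$ has more than two vertices, and that elementarity includes connectedness, so $H-v_1$ being disconnected genuinely exhibits a cut vertex. It is also worth a sentence that the degenerate case where $P$ is a single edge $ab$ is covered: no interior vertices get deleted then, but the edge $ab$ itself is, which is all your argument requires for $a$ to lose its only neighbour in $Y$.
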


For any 2-connected outerplane bipartite graph $G$ and a reducible face $s$ of $G$,
we know  from \cite{TV12} that the common periphery of $s$ and $G$ is an odd length path $P$.
By Proposition \ref{P:OuterPlaneReducibleFace}, $s$ is adjacent to exactly one inner face $s'$ of $G$.
It is clear that the common edge of $s$ and $s'$ is a single edge $e$. 
Therefore, $E(s)=e \cup E(P)$ and the odd length path $P$ must have at least three edges.

\begin{theorem} \cite{C19} \label{th0} 
Let $G$ be a 2-connected outerplane bipartite graph. Assume that $s$ is a reducible face of $G$ and $P$ is the common
periphery of $s$ and $G$. Let $H$ be the subgraph of $G$ obtained by removing all internal vertices and edges of $P$. 
Then $R(G)$ can be obtained from $R(H)$ by a peripheral convex expansion, 
that is, $R(G) = pce(R(H); T)$ where the set of all edges between $R(H)$ and
$T$ is a $\Theta$-class of $R(G)$ with the face-label $s$. 
Moreover,
\begin{itemize}
\item [$(i)$] $R(G)$ has exactly one more $\Theta$-class than $R(H)$ and it has the face-label $s$, and
\item [$(ii)$] each of other $\Theta$-classes of $R(G)$ can be obtained from the corresponding $\Theta$-class of $R(H)$ with the same face-label (adding
more edges if needed).
\end{itemize}
\end{theorem}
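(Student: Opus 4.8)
The plan is to derive everything from Theorem \ref{T:MedianZ(G)}, whose hypotheses are satisfied here, and to supply the one extra ingredient that the outerplane setting provides. Theorem \ref{T:MedianZ(G)} already tells us that the set $F$ of edges of $R(G)$ with face-label $s$ is a $\Theta$-class, that $R(G)-F$ splits into the two components $\langle \mathcal{M}(G;P^{-})\rangle\cong R(H)$ and $\langle \mathcal{M}(G;P^{+})\rangle$, and, crucially, that $R(G)$ is a peripheral convex expansion of $R(H)$ \emph{if and only if} $\mathcal{M}(G;P^{+})=\mathcal{M}(G;P^{+},\partial s)$, equivalently $\mathcal{M}(G;P^{+},\overline{\partial s})=\emptyset$. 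So the first half of the statement reduces to proving this single equality; the convex subgraph $T$ to be expanded is $\langle \mathcal{M}(G;P^{-},\partial s)\rangle$, which the $F$-matching identifies with the whole $P^{+}$ component $\langle \mathcal{M}(G;P^{+})\rangle$.

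To establish the equality I would first record the structural consequence of outerplanarity. By Proposition \ref{P:OuterPlaneReducibleFace} the reducible face $s$ is adjacent to exactly one inner face, so its common edge with the rest of $G$ is a single edge $e$ and $E(s)=e\cup E(P)$; thus $\partial s$ is the even cycle obtained by closing the odd path $P$ with $e$. Now take any $M\in\mathcal{M}(G;P^{+})$. By Proposition~4.1 of \cite{C18} the path $P$ is $M$-alternating, and since both end edges of $P$ lie in $M$ while $P$ has odd length, the two endpoints of $e$ are already $M$-saturated by edges of $P$; hence $e\notin M$. Reading the memberships around $\partial s$, the edges alternate between $M$ and $E(G)\setminus M$ along $P$, beginning and ending with an $M$-edge, and the closing edge $e\notin M$ preserves the alternation. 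Therefore $\partial s$ is an $M$-alternating cycle, i.e.\ $s$ is $M$-resonant and $M\in\mathcal{M}(G;P^{+},\partial s)$. This yields $\mathcal{M}(G;P^{+},\overline{\partial s})=\emptyset$, so by the criterion above $R(G)=pce(R(H);T)$ with $F$ the $\Theta$-class carrying face-label $s$, as required.

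For the two itemized claims I would exploit the concrete description of the expansion. Every edge of $R(G)$ other than those of $F$ lies in the $P^{-}$ component or the $P^{+}$ component, and $F$ pairs each matching of $\mathcal{M}(G;P^{+})$ with one in $\mathcal{M}(G;P^{-},\partial s)$ by toggling only the flap $s$. Since toggling $s$ leaves every other inner face untouched, an edge $M_1M_2$ of the $P^{+}$ component with face-label $t\neq s$ is carried by $F$ to an edge of the $P^{-}$ component with the \emph{same} face-label $t$, and conversely. Consequently the only $\Theta$-class of $R(G)$ without a counterpart in $R(H)$ is $F$ itself, giving (i); and every other $\Theta$-class of $R(G)$ consists of the edges of the corresponding $\Theta$-class of $R(H)$ (under $\langle \mathcal{M}(G;P^{-})\rangle\cong R(H)$) together with their doubled copies in the $P^{+}$ component that appear exactly when the class meets $T$, all sharing the label $t$, giving (ii).

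The main obstacle I anticipate is precisely this bookkeeping: one must verify that a $\Theta$-class of $R(H)$ cannot split into two distinct $\Theta$-classes of $R(G)$ and that face-labels are genuinely inherited rather than merely coincident. The cleanest way to secure this is to invoke the standard behaviour of the Djokovi\'c--Winkler relation under a convex expansion of median graphs---a convex expansion introduces exactly one new $\Theta$-class and leaves the correspondence of all others intact---and then to check face-label compatibility through the flap-toggling description above, which is the only point at which the resonance-graph structure, rather than pure median-graph theory, is used.
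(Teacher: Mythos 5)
This theorem is imported into the paper from \cite{C19} and stated without an internal proof, so there is no in-paper argument to compare against; judged on its own merits, your derivation is correct and is essentially the intended one behind the cited result. Specifically, outerplanarity via Proposition \ref{P:OuterPlaneReducibleFace} forces $E(s)=\{e\}\cup E(P)$, whence every $M\in\mathcal{M}(G;P^{+})$ satisfies $e\notin M$ and makes $\partial s$ an $M$-alternating cycle, giving $\mathcal{M}(G;P^{+})=\mathcal{M}(G;P^{+},\partial s)$ so that the peripheral-convex-expansion criterion of Theorem \ref{T:MedianZ(G)} applies; parts $(i)$ and $(ii)$ then follow, as you say, from the standard behaviour of $\Theta$-classes under a convex expansion of a median graph combined with your flap-toggling argument that the $F$-matching preserves face-labels of edges across the two components.
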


\begin{theorem}\label{T:OuterPlanePeripheryExpansionSequence}\cite{C19}
Let $G$ be a 2-connected outerplane bipartite  graph and $R(G)$ be its resonance graph.
Assume that $G$ has a reducible face decomposition $G_i (1\le i \le n)$  where $G_n=G$
associated with a sequence of inner faces $s_i (1\le i \le n)$ and a sequence of odd length ears $P_i (2 \le i \le n)$.
Then $R(G)$ can be obtained from the one edge graph by a sequence of peripheral convex expansions
with respect to the above reducible face decomposition of $G$. 
Furthermore,  $R(G_1)=K_2$ where the edge has the face-label $s_1$;  
for $2 \le i \le n$, $R(G_{i})=pce(R(G_{i-1}); T_{i-1})$ 
where the set of all edges between $R(G_{i-1})$ and $T_{i-1}$  is a $\Theta$-class in $R(G_{i})$ 
with the face-label $s_i$, $R(G_i)$ has exactly one more $\Theta$-class than $R(G_{i-1})$ and it has the face-label $s_i$,
each of other $\Theta$-classes of $R(G_i)$ can be obtained from the corresponding $\Theta$-class of $R(G_{i-1})$ 
with the same face-label (adding more edges if needed).
\end{theorem}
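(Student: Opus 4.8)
The plan is to argue by induction on $n$, the number of inner faces of $G$, using Theorem \ref{th0} as the single inductive step. For the base case $n=1$, the graph $G_1$ is the periphery of the inner face $s_1$, which is an even cycle because $G_1$ is plane elementary bipartite. An even cycle carries exactly two perfect matchings whose symmetric difference is the whole cycle, namely $E(s_1)$; hence $R(G_1)$ consists of two vertices joined by a single edge, so $R(G_1)=K_2$, and that edge receives the face-label $s_1$. This settles the base case and records the initial object of the expansion sequence.

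For the inductive step I would assume the statement for $G_{i-1}$ and pass to $G_i = G_{i-1}+P_i$, where $P_i$ is the $i$th odd ear and $s_i$ the inner face it creates. The key is to recognize $s_i$ as a reducible face of $G_i$ whose common periphery with the outer face of $G_i$ is exactly $P_i$. Because $P_i$ lies in the exterior of $G_{i-1}$, all of its edges lie on the periphery of $G_i$, and deleting the internal vertices and edges of $P_i$ from $G_i$ returns $G_{i-1}$. By the characterization of \cite{ZZ00}, $G_{i-1}$ is elementary (it possesses the reducible face decomposition $G_1,\dots,G_{i-1}$), so $s_i$ is reducible by definition, and Proposition \ref{P:OuterPlaneReducibleFace} together with the remark following it confirms that $s_i$ meets the rest of $G_i$ in a single edge, so that $E(s_i)=e\cup E(P_i)$ with $P_i$ an odd path of length at least three. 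Thus the hypotheses of Theorem \ref{th0} hold with $H=G_{i-1}$ and common periphery $P_i$.

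Applying Theorem \ref{th0} then yields $R(G_i)=pce(R(G_{i-1});T_{i-1})$ in which the set of edges joining $R(G_{i-1})$ to $T_{i-1}$ forms a single new $\Theta$-class with face-label $s_i$, $R(G_i)$ has exactly one more $\Theta$-class than $R(G_{i-1})$, and every other $\Theta$-class of $R(G_i)$ is obtained from the corresponding $\Theta$-class of $R(G_{i-1})$ with the same face-label by adding edges if needed. Composing this single expansion with the sequence furnished by the induction hypothesis for $R(G_{i-1})$ exhibits $R(G_i)$, and ultimately $R(G)=R(G_n)$, as obtained from $K_2$ by a sequence of peripheral convex expansions with the asserted face-label and $\Theta$-class bookkeeping.

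I expect the only real obstacle to be the structural verification in the inductive step: confirming that each RFD stage $G_i$ presents $s_i$ as a reducible face with common periphery exactly $P_i$ and with $H=G_{i-1}$, so that Theorem \ref{th0} genuinely applies at every step. Once this alignment between the ear decomposition and the reducible-face reductions is in place, the remainder is a direct iteration of Theorem \ref{th0}, and the tracking of face-labels and the growth of $\Theta$-classes are inherited verbatim from part $(ii)$ of that theorem.
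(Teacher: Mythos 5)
This statement is quoted in the paper from \cite{C19} without proof, so there is no in-paper argument to compare against; your proposal is correct and follows the standard route for the cited result, namely induction along the reducible face decomposition with Theorem \ref{th0} supplying each step, and the base case $R(G_1)=K_2$ handled directly. The only points you should make explicit are that each intermediate $G_i$ is itself a 2-connected outerplane bipartite graph (it is elementary by \cite{ZZ00} since $G_1,\ldots,G_i$ is an RFD, hence 2-connected, and it inherits outerplanarity as a plane subgraph of $G$), and that no edge of $\partial s_i \cap \partial G_{i-1}$ lies on the outer face of $G_i$ (each such edge separates $s_i$ from an inner face of $G_{i-1}$), so the common periphery of $s_i$ and $G_i$ is exactly $P_i$; both verifications are routine and your argument otherwise goes through as written.
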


The \textit{induced graph $\Theta(R(G))$} on the $\Theta$-classes of $R(G)$ is a graph
whose vertex set is the set of $\Theta$-classes, and two vertices $E$ and $F$ of $\Theta(R(G))$ are adjacent if  
$R(G)$ has two incident edges $e \in E$ and $f \in F$ such that $e$ and $f$  are not contained in a common 4-cycle of $R(G)$.
It is well-known that if $s$ and $t$ are two face labels of incident edges of a 4-cycle of $R(G)$, then $s$ and $t$ are vertex disjoint in $G$
and $M$-resonant for a perfect matching $M$ of $G$;  
if $s$ and $t$ are two face labels of incident edges not contained in a common 4-cycle of $R(G)$, then $s$ and $t$ are adjacent in $G$
and $M$-resonant for a perfect matching $M$ of $G$.

\begin{theorem}\label{T:Theta(R(G))-Isomorphic}\cite{C19}
Let $G$ be a 2-connected outerplane bipartite  graph and $R(G)$ be its resonance graph. 
Then the graph $\Theta(R(G))$ induced by the $\Theta$-classes of $R(G)$ is a tree 
and isomorphic to the inner dual of $G$.
\end{theorem}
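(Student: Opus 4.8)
The plan is to read the isomorphism off from a reducible face decomposition of $G$ and to detect the edges of $\Theta(R(G))$ using the quoted criterion for when two incident edges of $R(G)$ lie in a common $4$-cycle. First I would fix the vertex correspondence. By Theorem~\ref{T:OuterPlanePeripheryExpansionSequence}, along a decomposition $G_1,\dots,G_n=G$ each peripheral convex expansion creates exactly one new $\Theta$-class, carrying the face-label of the newly surrounded inner face, while every older $\Theta$-class retains its face-label. Since $R(G_1)=K_2$ is a single $\Theta$-class labelled $s_1$, an induction would give that $R(G)$ has exactly $n$ $\Theta$-classes and that the assignment $\Phi$ sending each $\Theta$-class to its face-label is a bijection onto the set of inner faces of $G$, i.e.\ onto the vertex set of the inner dual $T$.

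It then remains to match edges, where a single observation does most of the work: by the quoted well-known fact, two incident edges of $R(G)$ whose face-labels $s,t$ are \emph{not} vertex-disjoint in $G$ can never lie in a common $4$-cycle. Consequently, if $s,t$ are two \emph{adjacent} faces (they share an edge, hence are not vertex-disjoint), then $\Phi^{-1}(s)$ and $\Phi^{-1}(t)$ will be adjacent in $\Theta(R(G))$ as soon as there is a perfect matching $M$ of $G$ for which both $s$ and $t$ are $M$-resonant: flipping $s$ and flipping $t$ then yield two edges of $R(G)$ incident at $M$ with labels $s,t$, which cannot share a $4$-cycle. The reverse implication is immediate from the same fact, since incident edges lying in no common $4$-cycle must have adjacent face-labels. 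Thus the whole statement reduces to the \emph{simultaneous resonance} problem: any two adjacent faces of $G$ should admit a common resonant perfect matching.

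I would settle this by induction on $n$, the base case $n=1$ being trivial. For the step, let $s_n$ be the last, reducible, face, with common periphery $P$ and subgraph $H=G_{n-1}$ as in Theorem~\ref{th0}; by Proposition~\ref{P:OuterPlaneReducibleFace} it is adjacent to a unique inner face $s'$, so $s_n$ is a leaf of $T$ attached at $s'$. For a pair of \emph{old} faces $a,b$ (both faces of $H$) adjacency in $G$ coincides with adjacency in $H$; a common resonant matching exists in $H$ by the inductive hypothesis, and it transports through the label-preserving isomorphism $\langle\mathcal M(G;P^-)\rangle\cong R(H)$ of Theorem~\ref{T:MedianZ(G)} to a matching of $G$ keeping $a,b$ resonant, so the adjacencies among old classes are exactly those of $\Theta(R(H))\cong T_{n-1}$. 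By the forward implication together with the fact that $s_n$ meets only $s'$, the new vertex $F_n=\Phi^{-1}(s_n)$ can be adjacent to no class other than $\Phi^{-1}(s')$, so the crux is to realize a perfect matching of $G$ for which both $s_n$ and $s'$ are resonant.

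This last point I expect to be the main obstacle, and I would resolve it concretely. Writing $e_0$ for the edge shared by $s_n$ and $s'$, one checks that $\mathcal M(G;P^-,\partial s_n)$ is precisely the set of perfect matchings of $G$ containing $e_0$, and that under $\langle\mathcal M(G;P^-)\rangle\cong R(H)$ it corresponds to the perfect matchings of $H$ containing $e_0$. Since $s'$ is a face of $H$ and $\Phi^{-1}(s')$ is a nonempty $\Theta$-class of $R(H)$, some matching of $H$ makes $s'$ resonant; flipping $s'$ if necessary (which preserves resonance of $s'$ while toggling $e_0$) produces a matching $\widehat M$ of $H$ with $s'$ resonant and $e_0\in\widehat M$. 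Lifting $\widehat M$ to $M\in\mathcal M(G;P^-,\partial s_n)$ via the forced matching of the interior of $P$ keeps $\partial s'$ alternating and makes $\partial s_n$ alternating, so both $s_n$ and $s'$ are $M$-resonant. This would show that $\Theta(R(G))$ arises from $\Theta(R(H))$ by attaching the leaf $F_n$ at $\Phi^{-1}(s')$, exactly as $T$ arises from $T_{n-1}$ by attaching the leaf $s_n$ at $s'$; hence $\Phi$ is a graph isomorphism $\Theta(R(G))\cong T$, and since the object is built by successive leaf-additions it is a tree.
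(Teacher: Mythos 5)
This theorem is not proved in the paper at all --- it is imported verbatim from \cite{C19} --- so there is no in-paper argument to compare yours against; judged on its own merits, your proof is correct. Your vertex correspondence is exactly what Theorem \ref{T:OuterPlanePeripheryExpansionSequence} provides: each peripheral convex expansion adds exactly one new $\Theta$-class carrying the new face-label while old classes keep theirs, so the face-labelling $\Phi$ is a bijection from $\Theta$-classes onto inner faces. Your two-way edge detection is also sound: adjacency in $\Theta(R(G))$ forces adjacent face-labels by the second quoted $4$-cycle fact, while for adjacent faces $s,t$ a common resonant matching $M$ yields two edges incident at $M$ with labels $s,t$ which, by the contrapositive of the first quoted fact, cannot lie in a common $4$-cycle. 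The crux you correctly isolate --- that any two adjacent inner faces admit a common resonant perfect matching --- is settled by your induction: by the paper's remark following Proposition \ref{P:OuterPlaneReducibleFace}, $E(s_n)=\{e_0\}\cup E(P_n)$ where $e_0$ is the unique edge shared with the unique neighbouring inner face $s'$; the bijection between $\mathcal{M}(G_{n-1})$ and $\mathcal{M}(G;P_n^{-})$ given by the forced matching on the interior of $P_n$ preserves resonance of every face of $G_{n-1}$ (their boundaries are untouched), which handles the old adjacent pairs; and for the new pair $(s_n,s')$, your flip trick --- replacing $M$ by $M\oplus\partial s'$ if necessary to force $e_0$ into the matching, then lifting --- indeed produces a perfect matching of $G$ making both $s_n$ and $s'$ resonant, since within $\mathcal{M}(G;P_n^{-})$ resonance of $s_n$ is equivalent to containing $e_0$. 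The leaf-by-leaf growth simultaneously yields tree-ness and the isomorphism with the inner dual. This is presumably close in spirit to Che's own RFD-based induction in \cite{C19}, and in any case your argument is complete relative to the preliminaries quoted in this paper, which is precisely what a reader of this paper would want.
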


\section{Main results} 
\label{sec3}

In this section, we characterize resonance graphs of 2-connected outerplane bipartite graphs with isomorphic resonance graphs. 
 We start with the following lemma, which is a more detailed version 
 of Theorem \ref{th0}  \cite{C19} and Lemma 1 \cite{C21} for 2-connected outerplane bipartite graphs.
We use  $\mathcal{M}(G; e)$  to denote the set of perfect matchings of a graph $G$ 
containing the edge $e$ of $G$.

\begin{lemma} \label{grozna} Let $G$ be a 2-connected outerplane bipartite graph. 
Assume that $s$ is a reducible face of $G$, $P$ is the common
periphery of $s$ and $G$ and $e \in E(s)$ {is} a unique edge that does not belong to $P$. 
Let $H$ be the subgraph of $G$ obtained by removing all internal vertices and edges of $P$.

Further, assume that $H$ has more than two vertices. 
Let $M_{\hat{0}}$ be the minimum and $M_{\hat{1}}$ {be} the maximum in the distributive lattice $\mathcal{M}(H)$.  
Then $e$ is contained in exactly one of $M_{\hat{0}}$ and $M_{\hat{1}}$.

\begin{enumerate}
\item 
[$(i)$] Suppose that $M_{\hat{0}} \notin \mathcal{M}(H; e)$. 
Let $\widehat{M_{\hat{0}}}$ be the perfect matching of $G$ such that $M_{\hat{0}} \subseteq \widehat{M_{\hat{0}}}$ 
and  $\widehat{M_{\hat{1}}}$ {be} the perfect matching of $G$ such that $M_{\hat{1}} \setminus \{ e \} \subseteq \widehat{M_{\hat{1}}}$.
Then $\widehat{M_{\hat{0}}} \in \mathcal{M}(G;P^-, \overline{\partial s})$  is the minimum,  
and $\widehat{M_{\hat{1}}} \in \mathcal{M}(G;P^+, {\partial s})$ is the maximum of the finite distributive lattice $\mathcal{M}(G)$.

\item  
[$(ii)$] Suppose that  $M_{\hat{0}} \in \mathcal{M}(H; e)$. Let $\widehat{M_{\hat{0}}}$ be the perfect matching of $G$ such that $M_{\hat{0}} \setminus \{ e \} \subseteq \widehat{M_{\hat{0}}}$ and  $\widehat{M_{\hat{1}}}$ {be} the perfect matching of $G$ such that $M_{\hat{1}}  \subseteq \widehat{M_{\hat{1}}}$. Then  $\widehat{M_{\hat{0}}} \in \mathcal{M}(G;P^+, {\partial s})$ is the minimum,  
and $\widehat{M_{\hat{1}}} \in \mathcal{M}(G;P^-, \overline{\partial s})$ is the maximum of the finite distributive lattice $\mathcal{M}(G)$.
\end{enumerate}
 
\end{lemma}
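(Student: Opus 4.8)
\proof
The plan is to isolate the auxiliary claim that $e$ lies in exactly one of $M_{\hat 0}, M_{\hat 1}$, and then to read off the extremality of $\widehat{M_{\hat 0}}$ and $\widehat{M_{\hat 1}}$ from the two opposite cycle conditions characterizing the bottom and the top of a finite distributive lattice of perfect matchings: the minimum has no proper alternating cycle and the maximum has no improper one. For the auxiliary claim, let $u,v$ be the common end vertices of $e$ and of the odd path $P$. By Proposition~\ref{P:OuterPlaneReducibleFace} the face $s$ is adjacent to a single inner face $s'$ across the single edge $e$, so deleting the internal vertices and edges of $P$ merges $s$ into the outer face and leaves $e$ on the common boundary of $s'$ and the outer face; hence $e$ lies on the outer cycle $\partial H$. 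Applying Proposition~\ref{pro-ves} to $H$, the even cycle $\partial H$ is improper $M_{\hat 0}$-alternating and proper $M_{\hat 1}$-alternating. As an even cycle has exactly two perfect matchings, which are complementary with one proper and one improper, the sets $M_{\hat 0}\cap E(\partial H)$ and $M_{\hat 1}\cap E(\partial H)$ partition $E(\partial H)$; in particular $e$ belongs to exactly one of $M_{\hat 0}, M_{\hat 1}$, which separates cases (i) and (ii).

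Next I would name the two extensions and place them in the stated classes, carrying out (i); case (ii) is symmetric under $M_{\hat 0}\leftrightarrow M_{\hat 1}$ and $P^-\leftrightarrow P^+$. In (i) we have $e\notin M_{\hat 0}$, so $M_{\hat 0}$ already covers $u$ and $v$ within $H$ and its extension to $G$ must match the internal vertices of $P$ in consecutive pairs, i.e.\ follow the $P^-$ pattern; since $e\notin\widehat{M_{\hat 0}}$ the face $s$ is not resonant, giving $\widehat{M_{\hat 0}}\in\mathcal{M}(G;P^-,\overline{\partial s})$. For the top, $e\in M_{\hat 1}$ and $M_{\hat 1}\setminus\{e\}$ leaves exactly $u,v$ uncovered in $H$; completing by the two end edges of $P$ produces the $P^+$-extension $\widehat{M_{\hat 1}}$, and since $G$ is outerplane, Theorems~\ref{T:MedianZ(G)} and~\ref{th0} yield $\mathcal{M}(G;P^+)=\mathcal{M}(G;P^+,\partial s)$, so $\widehat{M_{\hat 1}}\in\mathcal{M}(G;P^+,\partial s)$.

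To prove extremality I would use one surgery, replacing the ear $P$ by the edge $e$. For the minimum, suppose $C$ is a proper $\widehat{M_{\hat 0}}$-alternating cycle in $G$. If $C\subseteq H$, then $\widehat{M_{\hat 0}}$ restricts to $M_{\hat 0}$ along $C$ and $C$ is a proper $M_{\hat 0}$-alternating cycle in $H$, contradicting minimality of $M_{\hat 0}$. Otherwise the internal vertices of $P$, having degree $2$ in $G$, force $C$ to contain all of $P$, so $C=P\cup Q$ for a $u$--$v$ path $Q$ in $H$, and I pass to $C'=e\cup Q$. The end edges of $P$ are non-matching in $\widehat{M_{\hat 0}}$ and $e$ is non-matching in $M_{\hat 0}$, so the edges incident to $u,v$ keep their alternation status and $C'$ is again $M_{\hat 0}$-alternating; as the surgery only alters the region inside $s$, the clockwise orientations of $C$ and $C'$ agree along $Q$, so $C'$ is a proper $M_{\hat 0}$-alternating cycle in $H$, again impossible. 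Hence $\widehat{M_{\hat 0}}$ is the minimum of $\mathcal{M}(G)$.

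For the maximum I would argue dually that $\widehat{M_{\hat 1}}$ has no improper alternating cycle $C$. Such a $C\subseteq H$ cannot meet $u$ or $v$, whose $\widehat{M_{\hat 1}}$-edges lie on $P$ so that two incident $H$-edges could not alternate, and off $u,v$ the matching $\widehat{M_{\hat 1}}$ agrees with $M_{\hat 1}$, so $C$ would be an improper $M_{\hat 1}$-alternating cycle in $H$, a contradiction. For $C=P\cup Q$ I again pass to $C'=e\cup Q$; alternation transfers as before, and the step I expect to be the main obstacle is that \emph{improperness} must survive the surgery, which amounts to comparing the clockwise sense of $e$ in $C'$ with that of the matching edge of $P$ at $u$ in $C$. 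Fixing the colors so that $u$ is white and $v$ is black, improperness of $C$ forces its matching edge of $P$ at $u$ to run black-to-white clockwise, so the clockwise traversal of $C$ leaves $u$ along $Q$; since orientations agree along $Q$, the clockwise traversal of $C'$ enters $u$ along $e$, whence $e$ runs black-to-white in $C'$. Together with the edges of $Q$ this makes $C'$ an improper $M_{\hat 1}$-alternating cycle in $H$, contradicting maximality of $M_{\hat 1}$; the symmetric treatment of case (ii) then completes the proof. \qed
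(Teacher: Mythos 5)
Your route is genuinely different from the paper's, so it is worth saying what each does. The paper never inspects alternating cycles of $G$ one by one: it exploits outerplanarity directly. Since the outer cycle of a 2-connected outerplane graph is Hamiltonian, a perfect matching that makes it improper (resp.\ proper) alternating consists exactly of alternate edges of that cycle, and by Proposition \ref{pro-ves} such a matching must be the minimum (resp.\ maximum) of the lattice; the whole proof then reduces to checking that $\widehat{M_{\hat{0}}}$ and $\widehat{M_{\hat{1}}}$ make the outer cycle of $G$ improper, resp.\ proper, alternating. You instead invoke the characterization of the minimum/maximum as the unique matchings admitting no proper/improper alternating cycle, and transfer cycles from $G$ to $H$ by the ear surgery $C=P\cup Q\mapsto C'=e\cup Q$. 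Your auxiliary claim (that $e$ lies in exactly one of $M_{\hat{0}}, M_{\hat{1}}$), your placement of the two matchings in the four classes, and your treatment of cycles with $Q\neq e$ (including the orientation transfer along $Q$) are correct.

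However, your maximality argument has a genuine gap: the case $Q=e$, i.e.\ $C=\partial s$. There the surgery degenerates ($C'=e\cup e$ is not a cycle), so this case needs separate treatment, and unlike in the minimum part it is not vacuous: $\partial s$ \emph{is} $\widehat{M_{\hat{1}}}$-alternating (this is precisely the resonance of $s$ that you yourself establish), so you must still rule out that it is \emph{improper} $\widehat{M_{\hat{1}}}$-alternating. If it were, then $\widehat{M_{\hat{1}}}\oplus\partial s$ would lie strictly above $\widehat{M_{\hat{1}}}$ in the lattice and the conclusion would fail; nothing in your proof excludes this. The missing fact is true, but proving it requires exactly the orientation ingredient the paper builds on: by Proposition \ref{pro-ves} applied to $H$, the edge $e\in M_{\hat{1}}$ is traversed from its white end $u$ to its black end $v$ by the clockwise orientation of the outer cycle of $H$; since the clockwise orientations of $\partial H$, $\partial G$ and $\partial s$ agree along shared edges (the bounded region lies on the same side of each such edge), the clockwise orientation of $\partial s$ traverses $P$ from $u$ to $v$, so the matching edges of $\partial s$ in $\widehat{M_{\hat{1}}}$ (the alternate edges of $P$ beginning with the end edge at $u$) run white to black, i.e.\ $\partial s$ is proper, not improper. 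Adding this computation (or simply switching to the paper's Hamiltonian-outer-cycle argument, which subsumes it) closes the gap. A minor further point: in the minimum part you should also note explicitly that $Q=e$ cannot occur, because $e$ and the two end edges of $P$ are three consecutive edges of $\partial s$ missed by $\widehat{M_{\hat{0}}}$, so $\partial s$ is not $\widehat{M_{\hat{0}}}$-alternating at all; as written, your uniform treatment of "$C=P\cup Q$" silently includes a degenerate case there too.
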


\begin{proof}   {Assume that $e$ is the unique edge of $E(s)$ that does not belong to $P$.} 
By Theorem \ref{th0} ,  $R(G)=pce(R(H), \langle\mathcal{M}(H; e)\rangle)$,
where the edges between $R(H)$ and $\langle\mathcal{M}(H; e)\rangle$ is a $\Theta$-class of $R(G)$ with the face-label $s$.
Moreover, $R(H) \cong \langle\mathcal{M}(G;P^-) \rangle$
and $ \langle\mathcal{M}(H; e)\rangle \cong \langle \mathcal{M}(G;P^-, {\partial s}) \rangle \cong \langle \mathcal{M}(G;P^+, {\partial s}) \rangle$. See Figure \ref{figure2}.

\begin{figure}[h!] 
\begin{center}
\includegraphics[scale=0.8, trim=0cm 0.5cm 0cm 0cm]{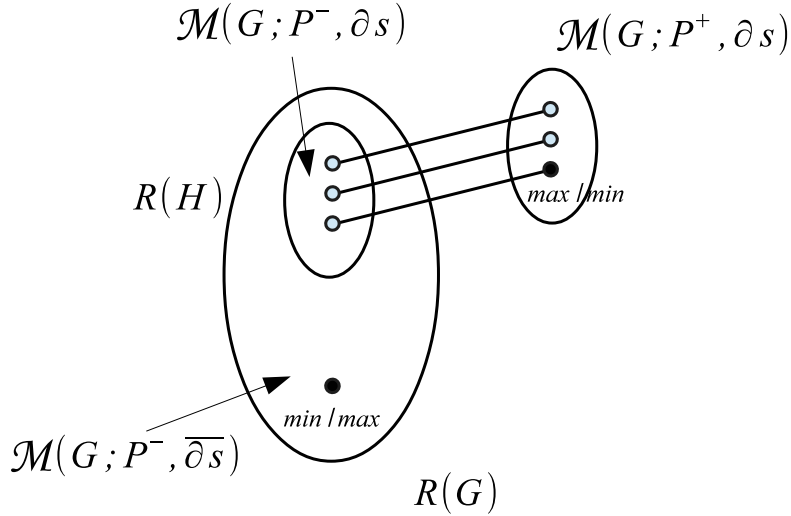}
\end{center}
\caption{\label{figure2} {A} peripheral convex expansion of the resonance graph $R(G)$.}
\end{figure}

Any 2-connected outerplane bipartite graph has two perfect matchings whose edges form alternating edges on the outer cycle of the graph.
By Proposition \ref{pro-ves}, one is the maximum and the other is the minimum in the finite distributive lattice on the set of perfect matchings of the graph.

Let $M_{\hat{0}}$ be the minimum and $M_{\hat{1}}$ {be} the maximum in the finite distributive lattice $\mathcal{M}(H)$.  
Then the outer cycle of $H$ is both {improper} $M_{\hat{0}}$-alternating and {proper} $M_{\hat{1}}$-alternating. 
Note that $e$ is an edge of the outer cycle of $H$.
Then $e$ is contained in exactly one of $M_{\hat{0}}$ and $M_{\hat{1}}$.

We will show only part $(i)$, since the proof of $(ii)$ is analogous. 
Suppose that $M_{\hat{0}}$ does not contain the edge $e$. 
{Recall that} the outer cycle of $H$ is improper $M_{\hat{0}}$-alternating. 
By the definition of $\widehat{M_{\hat{0}}}$, the outer cycle of $G$ is improper $\widehat{M_{\hat{0}}}$-alternating. 
Therefore,  $\widehat{M_{\hat{0}}}$ is the minimum of the distributive lattice $\mathcal{M}(G)$
{since $G$ is an outerplane bipartite graph. 
Note that three consecutive edges  on the periphery of $s$, namely $e$ and two end edges of $P$, are not contained in $\widehat{M_{\hat{0}}}$.
Then $s$ is not $\widehat{M_{\hat{0}}}$-resonant.}
So, $\widehat{M_{\hat{0}}} \in \mathcal{M}(G;P^-, \overline{\partial s})$.

{Note that $M_{\hat{1}}$ contains the edge $e$ since $M_{\hat{0}}$ does not contain $e$ by our assumption for part $(i)$. 
Recall that the outer cycle of $H$ is proper $M_{\hat{1}}$-alternating. 
By the definition of $\widehat{M_{\hat{1}}}$, $\widehat{M_{\hat{1}}} \in \mathcal{M}(G;P^+)$
and the outer cycle of $G$ is again proper $\widehat{M_{\hat{1}}}$-alternating.
It follows that $s$ is  $\widehat{M_{\hat{1}}}$-resonant.}
Consequently, $\widehat{M_{\hat{1}}} \in \mathcal{M}(G;P^+, {\partial s}) $ is the maximum of the finite distributive lattice $\mathcal{M}(G)$. \qed
\end{proof}
\bigskip

To state the next theorem, we need the following notation. Let $G$ and $G'$ be 2-connected outerplane bipartite graphs. Suppose that $\phi$ is an isomorphism  between resonance graphs  $R(G)$ and $R(G')$. 
By Theorem \ref{T:Theta(R(G))-Isomorphic}, {the} isomorphism $\phi$ induces an isomorphism between inner duals of $G$ and $G'$, 
which we denote by $\widehat{\phi}$.

%%%%%%%%%%%%%%%%%%%%%%%%%%%%%%%%%%%%%%%%
%%%%%%%%%%%%%%%%%%%%%%%%%%%%%%%%%%%%%%%%%%
%%%%%%%%%%%%%%%%%%%%%%%%%%%%%%%%%%%%%%%%%%%
%%%%%%%%%%%%%%%%%%%%%%%%%%%%%%%%%%%%%%%%%%
%%%%%%%%%%%%%%%%%%%%%%%%%%%%%%%%%%%%%%%%%%%%

\begin{theorem} \label{glavni} Let $G$ and $G'$ be 2-connected outerplane bipartite graphs. 
If there exists an isomorphism $\phi$ between resonance graphs  $R(G)$ and $R(G')$, then
$G$ has a reducible face decomposition $G_i (1 \le i \le n)$ where $G_n=G$
associated with the face sequence 
$s_i (1 \le i \le n)$ and the odd length path sequence $P_i (2 \le i  \le n)$;  and
$G'$ has a reducible face decomposition $G'_i (1 \le i \le n)$ where $G'_n=G'$
associated with the face sequence $s'_i (1 \le i  \le n)$ and the odd length path sequence $P'_i (2 \le i  \le n)$ satisfying three properties:
\begin{itemize}
\item[(i)] the isomorphism $\widehat{\phi}$ between the inner duals of $G$ and $G'$ maps $s_i$ to $s'_i$  for   $1 \le i  \le n$; 
\item[(ii)] $G$ and $G'$ can be properly two colored so that odd length paths $P_i$ and $P'_i$ 
either both start from a black vertex and end with a white vertex, or both start from 
a white vertex and end with a black vertex in  clockwise orientation along the peripheries of $G_i$ and $G'_i$ for $2 \le i  \le n$;
\item[(iii)] ${\phi}$ is an isomorphism between resonance digraphs $\overrightarrow{R}(G)$ and $\overrightarrow{R}(G')$ with respect to the colorings from property $(ii)$.
\end{itemize}
\end{theorem}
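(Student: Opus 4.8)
The plan is to argue by induction on the number $n$ of inner faces of $G$, peeling off one reducible face of $G$ together with the matching reducible face of $G'$ at each stage. For the base case $n=1$ both graphs are single even cycles and $R(G)\cong R(G')\cong K_2$; here the only freedom is the choice of proper $2$-coloring of $G'$ relative to $G$, and I would spend this one bit to orient the unique $\Theta$-class so that $\phi$ sends the lattice minimum to the lattice minimum. This makes $\phi$ a digraph isomorphism, renders property $(ii)$ vacuous, and makes $(i)$ and $(iii)$ immediate.

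For the inductive step I would first choose a reducible face $s_n$ of $G$. By the correspondence between reducible faces and leaves of the inner dual for $2$-connected outerplane bipartite graphs (Proposition \ref{P:OuterPlaneReducibleFace} together with the reducible face decomposition theory behind Theorem \ref{T:OuterPlanePeripheryExpansionSequence}), $s_n$ is a leaf of the tree $\Theta(R(G))$, which is isomorphic to the inner dual $T$ by Theorem \ref{T:Theta(R(G))-Isomorphic}. Since $\widehat{\phi}\colon T\to T'$ is a tree isomorphism it carries leaves to leaves, so $s'_n:=\widehat{\phi}(s_n)$ is a reducible face of $G'$. Writing $F,F'$ for the $\Theta$-classes with face-labels $s_n,s'_n$, the definition of $\widehat{\phi}$ gives $\phi(F)=F'$. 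Let $H=G_{n-1}$ and $H'=G'_{n-1}$ arise by deleting the internal vertices and edges of the common peripheries $P_n,P'_n$. By Theorem \ref{th0}, $R(G)-F$ has two components, one isomorphic to $R(H)$ and the other, strictly smaller, isomorphic to $\langle\mathcal{M}(H;e)\rangle$ (strictly smaller because no edge of an elementary bipartite graph with more than two vertices lies in all perfect matchings), and symmetrically for $R(G')-F'$. As $\phi$ maps components to components and preserves cardinality, it sends the larger component to the larger one, hence restricts to an isomorphism $R(H)\to R(H')$, to which I apply the induction hypothesis to obtain RFDs of $H,H'$ satisfying $(i)$–$(iii)$ with compatible proper $2$-colorings.

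Appending $s_n,s'_n$ and $P_n,P'_n$ gives RFDs of $G,G'$, and property $(i)$ is immediate since $\widehat{\phi}(s_i)=s'_i$ for $i<n$ by induction and $\widehat{\phi}(s_n)=s'_n$ by construction. Extending the inductive colorings from $H,H'$ to $G,G'$ (the new vertices on $P_n,P'_n$ receive forced colors) leaves the clockwise black–white orientations of $P_i,P'_i$ for $i<n$ unchanged, so the only things left to verify are the orientation of $P_n$ against $P'_n$ in $(ii)$ and the direction of $F$ against $F'$ in $(iii)$. Because an orientation of a median graph into the Hasse diagram of its distributive lattice amounts to a consistent orientation of each $\Theta$-class, and because Theorem \ref{th0}$(ii)$ identifies every $\Theta$-class of $R(G)$ other than $F$ with the corresponding $\Theta$-class of $R(H)$ (equally oriented, since the coloring agrees on $H$), the inductive fact that $\phi|_{R(H)}$ is a digraph isomorphism already forces $\phi$ to preserve the direction of every $\Theta$-class except possibly $F$. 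Thus $(ii)$ and $(iii)$ collapse to the single assertion that $\phi$ preserves the direction of $F$, equivalently that $P_n$ and $P'_n$ are oriented the same way.

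To settle this, which is the crux, I would invoke Lemma \ref{grozna}: the direction of $F$ records whether resonating $s_n$ moves up or down in $\mathcal{M}(G)$, and by that lemma this is governed entirely by whether $e\in M_{\hat{0}}(H)$ (case $(ii)$) or $e\in M_{\hat{1}}(H)$ (case $(i)$), i.e.\ by whether the lattice minimum $M_{\hat{0}}(H)$ lies in the convex set $\langle\mathcal{M}(H;e)\rangle$. The $F$-edges meet the $R(H)$-component precisely along $\langle\mathcal{M}(G;P^-,\partial s)\rangle\cong\langle\mathcal{M}(H;e)\rangle$, so $\phi$, carrying $F$-edges to $F'$-edges, maps $\langle\mathcal{M}(H;e)\rangle$ onto $\langle\mathcal{M}(H';e')\rangle$; being a digraph isomorphism it also sends $M_{\hat{0}}(H)$ to $M_{\hat{0}}(H')$. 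Hence $M_{\hat{0}}(H)\in\langle\mathcal{M}(H;e)\rangle$ if and only if $M_{\hat{0}}(H')\in\langle\mathcal{M}(H';e')\rangle$, so $G$ and $G'$ fall into the same case of Lemma \ref{grozna}, and a direct check shows that case $(i)$ versus $(ii)$ corresponds exactly to the black-to-white versus white-to-black clockwise orientation of $P_n$. This yields $(ii)$ for $i=n$ and therefore $(iii)$. The main obstacle I anticipate is exactly this last synchronization: showing that the single remaining bit of coloring freedom is already pinned down by the preserved lattice order, rather than being free to adjust by hand, since any such adjustment would destroy the inductively arranged orientations of $P_2,\dots,P_{n-1}$. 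Secondary care is needed to confirm the reducible-face/leaf correspondence and that $\phi$ genuinely restricts to a digraph isomorphism on the $R(H)$-component.
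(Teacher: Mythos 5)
Your proposal is correct and follows essentially the same route as the paper's own proof: induction along a reducible face decomposition, restricting $\phi$ to an isomorphism (indeed a digraph isomorphism) of the smaller resonance graphs $R(G_{n-1})$ and $R(G'_{n-1})$, and then synchronizing the one remaining $\Theta$-class via Lemma \ref{grozna} together with the fact that a digraph isomorphism carries the lattice minimum to the lattice minimum. Your explicit cardinality argument for which component of $R(G)-F$ maps to which, and your appeal to the leaf/reducible-face correspondence to see that $\widehat{\phi}(s_n)$ is reducible, are details the paper passes over silently, but the substance of the two arguments is the same.
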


%\vskip 0.1in

\begin{proof} 
{Let $\phi: R(G) \longrightarrow R(G')$ be an isomorphism between $R(G)$ and  $R(G')$.
By Theorem \ref{T:Theta(R(G))-Isomorphic},
the graph $\Theta(R(G))$ induced by the $\Theta$-classes of $R(G)$ is a tree 
and isomorphic to the inner dual of $G$, 
and  the graph $\Theta(R(G'))$ induced by the $\Theta$-classes of $R(G')$ is a tree 
and isomorphic to the inner dual of $G'$. 
By the peripheral convex expansions with respect to a reducible face decomposition
of a 2-connected outerplane bipartite graph given by Theorem \ref{T:OuterPlanePeripheryExpansionSequence}, 
we can see that $\phi$ induces an isomorphism $\widehat{\phi}$ between the inner duals of $G$ and $G'$.
So, $G$ and $G'$ have the same number of inner faces.}

Suppose that $G$ and $G'$ have $n$ inner faces. 
Obviously, all three properties hold if $n=1$ or $n=2$. 
{Let $n \ge 3$.} We proceed by induction on $n$ and therefore assume 
that all three properties hold for {any} 2-connected outerplane bipartite graphs with less than $n$ inner faces.

Let $s_n$ be a reducible face of $G$, $P_n$ be  the common
periphery of $s_n$ and $G$, and  $E$ be the $\Theta$-class in $R(G)$ corresponding to $s_n$. 
Moreover, we denote by $E'$ the $\Theta$-class in $R(G')$ obtained from $E$ by the isomorphism $\phi$,
and $s'_n$ the corresponding reducible face  of $G'$. 
Then $s'_n=\widehat{\phi}(s_n)$. Also, we denote by $P'_n$  the common periphery of $s'_n$ and $G'$.

By Theorem \ref{th0}, the graph $R(G)$ is obtained from $R(G_{n-1})$ by a peripheral convex expansion with respect to the $\Theta$-class $E$. 
Similarly, the graph $R(G')$ is obtained from $R(G'_{n-1})$ by a peripheral convex expansion with respect to the $\Theta$-class $E'$. 
Since $\phi$ is an isomorphism between $R(G)$ and $R(G')$ such that $\phi$ maps $E$ to $E'$,
it follows that $R(G_{n-1})$ and $R(G'_{n-1})$ are isomorphic and the restriction of $\phi$ on $R(G_{n-1})$ 
is an isomorphism $\phi_{n-1}$ between $R(G_{n-1})$ and $R(G'_{n-1})$. 
{Let $\widehat{\phi}_{n-1}$ be the induced isomorphism between the inner duals of $G_{n-1}$ and $G'_{n-1}$.
Then $\widehat{\phi}_{n-1}$ is the restriction of $\widehat{\phi}$ on the inner dual of $G_{n-1}$.}

Since $G_{n-1}$ and $G'_{n-1}$ have $n-1$ inner faces, by the induction hypothesis $G_{n-1}$ has a reducible face decomposition $G_i (1 \le i \le n-1)$
associated with the face sequence 
$s_i (1 \le i \le n-1)$ and the odd length path sequence $P_i (2 \le i  \le n-1)$;  and
$G'$ has a reducible face decomposition $G'_i (1 \le i \le n-1)$ 
associated with the face sequence $s'_i (1 \le i  \le n-1)$ and the odd length path sequence $P'_i (2 \le i  \le n-1)$ satisfying 
properties $(i)$ $s'_i=\widehat{\phi}_{n-1}(s_i)$ for $1 \le i  \le n-1$, $(ii)$ $G_{n-1}$ and $G'_{n-1}$ can be properly two colored so that odd length paths $P_i$ and $P'_i$ 
either both start from a black vertex and end with a white vertex, or both start from 
a white vertex and end with a black vertex in  clockwise orientation along the peripheries of $G_i$ and $G'_i$ for $2 \le i  \le n-1$, and $(iii)$ ${\phi}_{n-1}$ is an isomorphism between resonance digraphs $\overrightarrow{R}(G_{n-1})$ and $\overrightarrow{R}(G'_{n-1})$ with respect to the colorings from property $(ii)$. 

Obviously, since $s_n'=\widehat{\phi}(s_n)$ and $s'_i=\widehat{\phi}_{n-1}(s_i)=\widehat{\phi}(s_i)$ for $1 \le i  \le n-1$,
property $(i)$ holds for 
{the above} reducible face decompositions of $G$ and $G'$.
It remains to show that the these reducible face decompositions  satisfy property $(ii)$ when $i=n$,  
and $\phi$ is an isomorphism between  resonance digraphs $\overrightarrow{R}(G)$ and $\overrightarrow{R}(G')$
with respect to the colorings from property $(ii)$, that is, property $(iii)$ holds.

By Proposition \ref{P:OuterPlaneReducibleFace},
$s_n$ is adjacent to exactly one inner face of $G$ since $s_n$ is a reducible face of $G$.
Suppose that the unique {inner} face adjacent to $s_n$ is $s_j$. Since $\widehat{\phi}$ 
is an isomorphism between the inner duals of $G$ and $G'$, the unique {inner} face adjacent to $s'_n$ is $s'_j$.
By Lemma \ref{grozna}, $\partial s_n \cap \partial s_{j}$ is an edge $uv$ on $\partial G_{n-1}$,
 and $\partial s'_n \cap \partial s'_{j}$ is an edge $u'v'$ on $\partial G'_{n-1}$.
It is clear that $u$ and $v$ (resp., $u'$ and $v'$) are two end vertices of $P_n$ (resp., $P'_n$).
Moreover,  $R(G)=pce(R(G_{n-1}), \langle\mathcal{M}(G_{n-1}; uv)\rangle)$ 
where $R(G_{n-1}) \cong \langle\mathcal{M}(G;P_n^-) \rangle$ and 
$\langle\mathcal{M}(G_{n-1}; uv)\rangle \cong \langle \mathcal{M}(G;P_n^-, {\partial s_n}) \rangle \cong \langle \mathcal{M}(G;P_n^+, {\partial s_n}) \rangle$,
and $R(G')=pce(R(G'_{n-1}), \langle\mathcal{M}(G'_{n-1}; u'v')\rangle)$
where $R(G'_{n-1}) \cong \langle\mathcal{M}(G';{P'}_n^-) \rangle$ and 
$\langle\mathcal{M}(G'_{n-1}; u'v')\rangle   \cong \langle \mathcal{M}(G';{P'}_n^-, {\partial s'_n}) \rangle \cong \langle \mathcal{M}(G';{P'}_n^+, {\partial s'_n}) \rangle$.

{Recall that $\phi$ is an isomorphism 
between $R(G)=pce(R(G_{n-1}), \langle\mathcal{M}(G_{n-1}; uv)\rangle)$ 
and $R(G')=pce(R(G'_{n-1}), \langle\mathcal{M}(G'_{n-1}; u'v')\rangle)$.
We also have that ${\phi}_{n-1}$ is an isomorphism between resonance digraphs $\overrightarrow{R}(G_{n-1})$ and $\overrightarrow{R}(G'_{n-1})$,
where $\phi_{n-1}$  is the restriction of $\phi$ on $R(G_{n-1})$.
Hence, $\phi_{n-1}$ maps  $\langle\mathcal{M}(G_{n-1}; uv)\rangle$ to $\langle\mathcal{M}(G'_{n-1}; u'v')\rangle$
such that if an edge $xy$ of $\langle\mathcal{M}(G_{n-1}; uv)\rangle$ is directed
from $x$ to $y$ in $\overrightarrow{R}(G_{n-1})$,
then $\phi_{n-1}(x)\phi_{n-1}(y)$ is an edge of $\langle\mathcal{M}(G'_{n-1}; u'v')\rangle$ directed
from $\phi_{n-1}(x)$ to $\phi_{n-1}(y)$ in $\overrightarrow{R}(G'_{n-1})$.

\smallskip
{\it Claim 1.}  Each edge of $ \mathcal{M}(G;P_n^+, {\partial s_n}) \rangle$ 
resulted from the peripheral convex expansion of an edge $x_1y_1$ in
$\langle\mathcal{M}(G_{n-1}; uv)\rangle$ has the same orientation as 
the edge of  $\mathcal{M}(G'; {P'}_n^+, {\partial s'_n}) \rangle$ 
resulted from the peripheral convex expansion of $\phi_{n-1}(x_1)\phi_{n-1}(y_1)$ in
$\langle\mathcal{M}(G'_{n-1}; u'v')\rangle$. 

{\it Proof of Claim 1. }
Let $x_1y_1$ be an edge in $\langle\mathcal{M}(G_{n-1}; uv)\rangle$.
Then $\phi_{n-1}(x_1)\phi_{n-1}(y_1)$ is its corresponding edge
under $\phi_{n-1}$ in $ \langle\mathcal{M}(G'_{n-1}; u'v')\rangle$.

Assume that $x_1x_2$ and $y_1y_2$ are two edges 
between $R(G_{n-1})$ and {$\langle \mathcal{M}(G;P_n^+, {\partial s_n}) \rangle$},
where both edges  have face-label $s_n$.
Then $x_2y_2$ is an edge of $\langle \mathcal{M}(G;P_n^+, {\partial s_n}) \rangle$ resulted from the peripheral convex expansion of the edge $x_1y_1$. Note that $\phi_{n-1}(x_1)\phi(x_2)$ and $\phi_{n-1}(y_1)\phi(y_2)$ are two edges 
between $R(G'_{n-1})$ and {$\langle \mathcal{M}(G';{P'}_n^+, {\partial s'_n}) \rangle$},
where both edges have face-label $s'_n=\widehat{\phi}(s_n)$.
Hence, $\phi(x_2)\phi(y_2)$ is an edge of $ \langle \mathcal{M}(G';{P'}_n^+, {\partial s'_n}) \rangle$ 
resulted from the peripheral convex expansion of the edge $\phi_{n-1}(x_1)\phi_{n-1}(y_1)$.

Without loss of generality, we show that if $x_2y_2$ is directed from $x_2$ to $y_2$ in $\overrightarrow{R}(G)$, 
then $\phi(x_2)\phi(y_2)$ is directed from $\phi(x_2)$ to $\phi(y_2)$ in $\overrightarrow{R'}(G)$.

Recall  both edges $x_1x_2$ and $y_1y_2$ of $R(G)$ have face-label $s_n$.
Then $x_1=x_2 \oplus \partial s_n$ and  $y_1=y_2 \oplus \partial s_n$. 
By the peripheral convex expansion structure of $R(G)$ from $R(G_{n-1})$, vertices $x_1,y_1, y_2, x_2$ form a 4-cycle $C$ in $R(G)$.
It is well known \cite{C18} that two antipodal edges of a 4-cycle in $R(G)$ have the same face-label
and two face-labels of adjacent edges of a 4-cycle in $R(G)$ are vertex disjoint faces of $G$.
Assume that two antipodal edges $x_1y_1$ and $x_2y_2$ of $C$ in $R(G)$ have the face-label $s_k$ for some $1 \le k \le n-1$.
Then $x_1 \oplus y_1 =x_2 \oplus y_2 =\partial s_k$ where $s_k$ is vertex disjoint from $s_n$.
By our assumption that $x_2y_2$ is directed from $x_2$ to $y_2$ in $\overrightarrow{R}(G)$, 
it follows that $x_1y_1$ is directed from $x_1$ to $y_1$ in $\overrightarrow{R}(G_{n-1}) \subset \overrightarrow{R}(G)$.

Since $\phi_{n-1}$ is an isomorphism between resonance digraphs $\overrightarrow{R}(G_{n-1})$ and  $\overrightarrow{R'}(G_{n-1})$,
we have that $\phi_{n-1}(x_1)\phi_{n-1}(y_1)$ is directed from $\phi_{n-1}(x_1)$ to $\phi_{n-1}(y_1)$ in $\overrightarrow{R'}(G_{n-1})$.
Similarly to the above argument,  vertices $\phi_{n-1}(x_1), \phi_{n-1}(y_1), \phi(y_2), \phi(x_2)$ form a 4-cycle in $R(G')$,
where two antipodal edges $\phi_{n-1}(x_1)\phi_{n-1}(y_1)$ and $\phi(x_2) \phi(y_2)$ of $C'$ in $R(G')$ 
have the face-label $s'_k=\widehat{\phi}_{n-1}(s_k)$, where $s'_k$ and $s'_n$ are vertex disjoint faces of $G'$.
Recall both edges $\phi_{n-1}(x_1)\phi(x_2)$ and $\phi_{n-1}(y_1)\phi(y_2)$ have face-label $s'_n=\widehat{\phi}(s_n)$.
Then $\phi(x_2)=\phi_{n-1}(x_1) \oplus \partial s'_n$ and  $\phi(y_2)=\phi_{n-1}(y_1) \oplus \partial s'_n$. 
It follows that $\phi(x_2)\phi(y_2)$ is directed from $\phi(x_2)$ to $\phi(y_2)$ in $\overrightarrow{R'}(G)$.
Therefore, Claim 1 holds.
\smallskip

{\it Claim 2.} The edges  between 
$\mathcal{M}(G;P_n^-, {\partial s_n})$ 
and $\mathcal{M}(G; P_n^{+}, \partial s_n)$ in  $\overrightarrow{R}(G)$ have the same orientation as the edges  between 
$\mathcal{M}(G'; {P'_n}^{-}, \partial s'_n)$ and $\mathcal{M}(G'; {P'_n}^{+}, \partial s'_n)$ in $\overrightarrow{R}(G')$.

{\it Proof of Claim 2.} By definitions of $\mathcal{M}(G;P_n^-, {\partial s_n})$, $\mathcal{M}(G; P_n^{+}, \partial s_n)$, 
and directed edges in $\overrightarrow{R}(G)$,
we can see that all edges between 
$\mathcal{M}(G;P_n^-, {\partial s_n})$ 
and $\mathcal{M}(G; P_n^{+}, \partial s_n)$
are directed from one set to the other.
Similarly, all edges between $\mathcal{M}(G'; {P'_n}^{-}, \partial s'_n)$ and $\mathcal{M}(G'; {P'_n}^{+}, \partial s'_n)$
are directed from one set to the other.

Let $M_{\hat{0}}$ be the minimum and $M_{\hat{1}}$ the maximum in the distributive lattice $\mathcal{M}(G_{n-1})$. 
By Lemma \ref{grozna},  exactly one of these two perfect matchings contains the edge $uv$. 
Without loss of generality,  let $M_{\hat{0}} \in \mathcal{M}(G_{n-1}; uv)$
where $\langle\mathcal{M}(G_{n-1}; uv)\rangle \cong \langle \mathcal{M}(G;P_n^-, {\partial s_n}) \rangle \cong \langle \mathcal{M}(G;P_n^+, {\partial s_n}) \rangle$. 
Let $\widehat{M_{\hat{0}}}$ be the perfect matching of $G$ such that $M_{\hat{0}} \setminus \{ uv \} \subseteq \widehat{M_{\hat{0}}}$. 
Then  $\widehat{M_{\hat{0}}} \in \mathcal{M}(G;P^+, {\partial s})$ is the minimum of the distributive lattice $\mathcal{M}(G)$.

Let $M'_{\hat{0}}=\phi_{n-1}(M_{\hat{0}})$. Then $M'_{\hat{0}}$ is the minimum of the distributive lattice $\mathcal{M}(G'_{n-1})$,
and $M'_{\hat{0}} \in \mathcal{M}(G'_{n-1}; u'v')$ 
where $\langle\mathcal{M}(G'_{n-1}; u'v')\rangle   \cong \langle \mathcal{M}(G';{P'}_n^-, {\partial s'_n}) \rangle \cong \langle \mathcal{M}(G';{P'}_n^+, {\partial s'_n}) \rangle$.
As before, define  $\widehat{M'_{\hat{0}}}$ as the perfect matching of $G'$ such that $M'_{\hat{0}} \setminus \{ u'v' \} \subseteq \widehat{M'_{\hat{0}}}$. 
By Lemma \ref{grozna},  $\widehat{M'_{\hat{0}}} \in \mathcal{M}(G';P'^+, {\partial s})$ is the minimum of the distributive lattice $\mathcal{M}(G')$.
This implies that Claim 2 holds.
%That means that the edges  between 
%$ \mathcal{M}(G; P_n^{-}, \partial s_n) $ and $ \mathcal{M}(G; P_n^{+}, \partial s_n) $ in  $\overrightarrow{R}(G)$ have the same orientation as the edges  between 
%$ \mathcal{M}(G'; {P'_n}^{-}, \partial s'_n) $ and $ \mathcal{M}(G'; {P'_n}^{+}, \partial s'_n) $ in $\overrightarrow{R}(G')$. 
%Recall that $\phi$ is  an isomorphism  between resonance graphs  $R(G)$ and $R(G')$
%and the restriction of $\phi$ on $ {R(G_{n-1})}$ is $\phi_{n-1}$, which is an isomorphism between resonance digraphs 
%\textcolor{red}{$\overrightarrow{R}(G_{n-1})$ and $\overrightarrow{R}(G_{n-1}')$.}
\smallskip
}

Consequently, $\phi$ is {also} an isomorphism between resonance digraphs  $\overrightarrow{R}(G)$ and $\overrightarrow{R}(G')$,  
which means that property $(iii)$ holds.

Suppose that $P_n=\partial s_n -uv$ starts with $u$ and ends with $v$ along  the clockwise orientation of the periphery of $G$, 
and $P'_n=\partial s'_n -u'v'$ starts with $u'$ and ends with $v'$ along the  the clockwise orientation of the periphery of $G'$. 
Since the resonance digraphs $\overrightarrow{R}(G)$ and $\overrightarrow{R}(G')$ are isomorphic, it follows that $u$ and $u'$ have the same color 
and $v$ and $v'$ have the same color. 
So, the above reducible face decompositions of $G$ and $G'$ also satisfy property $(ii)$  when  $i = n$. 
Therefore, property $(ii)$ holds. \qed
\end{proof}
\bigskip

The following corollary follows directly from Theorem \ref{glavni}.

\begin{corollary} \label{cor1} Let $G$ and $G'$ be 2-connected outerplane bipartite graphs. 
Then their resonance graphs  $R(G)$ and $R(G')$ are isomorphic if and only if 
 $G$ and $G'$ can be properly two colored so that $\overrightarrow{R}(G)$ and $\overrightarrow{R}(G')$ are isomorphic.
\end{corollary}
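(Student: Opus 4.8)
The plan is to treat the two implications of the biconditional separately, observing that the forward direction is essentially a restatement of Theorem~\ref{glavni} and the reverse direction is immediate from the definition of the resonance digraph.

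For the forward implication, I would invoke Theorem~\ref{glavni} directly. Suppose $\phi$ is an isomorphism between $R(G)$ and $R(G')$. Then Theorem~\ref{glavni} produces reducible face decompositions of $G$ and $G'$ together with proper two-colorings satisfying property $(ii)$, and property $(iii)$ asserts precisely that $\phi$ is an isomorphism between the resonance digraphs $\overrightarrow{R}(G)$ and $\overrightarrow{R}(G')$ with respect to these colorings. Hence the required colorings exist and $\overrightarrow{R}(G) \cong \overrightarrow{R}(G')$.

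For the reverse implication, I would argue that $\overrightarrow{R}(G)$ is obtained from $R(G)$ merely by assigning an orientation to each edge, so that the underlying undirected graph of $\overrightarrow{R}(G)$ is exactly $R(G)$, and likewise for $G'$. Consequently, any digraph isomorphism $\psi$ between $\overrightarrow{R}(G)$ and $\overrightarrow{R}(G')$ is in particular a vertex bijection carrying directed edges to directed edges; forgetting orientations, $\psi$ is a graph isomorphism between $R(G)$ and $R(G')$.

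I do not expect a genuine obstacle: the substantive content is entirely carried by Theorem~\ref{glavni}, and the only point left to verify, namely that a digraph isomorphism forgets to an isomorphism of underlying undirected graphs, is routine. The single thing worth stating explicitly is that the two-colorings appearing in the forward direction are exactly those furnished by property $(ii)$ of Theorem~\ref{glavni}, so that the colorings assumed in the statement and those produced by the theorem refer to the same notion.
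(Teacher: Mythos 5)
Your proposal is correct and matches the paper's approach: the paper simply remarks that the corollary ``follows directly from Theorem~\ref{glavni}'', which is exactly your argument --- the forward direction is property $(iii)$ of that theorem, and the reverse direction is the routine observation that forgetting orientations turns a digraph isomorphism of $\overrightarrow{R}(G)$ and $\overrightarrow{R}(G')$ into a graph isomorphism of $R(G)$ and $R(G')$.
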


We are now ready to state the following main result of the paper.

\begin{theorem} \label{glavni1} Let $G$ and $G'$ be 2-connected outerplane bipartite graphs. 
Then their resonance graphs  $R(G)$ and $R(G')$ are isomorphic if and only if 
$G$ has a reducible face decomposition $G_i (1 \le i \le n)$
associated with the face sequence 
$s_i (1 \le i \le n)$ and the odd length path sequence $P_i (2 \le i  \le n)$;  and
$G'$ has a reducible face decomposition $G'_i (1 \le i \le n)$ 
associated with the face sequence $s'_i (1 \le i  \le n)$ and the odd length path sequence $P'_i (2 \le i  \le n)$ satisfying two properties:
\begin{itemize}
\item [$(i)$] the map sending $s_i$ to $s'_i$ induces an isomorphism between the inner dual of $G$ and inner dual of $G'$  for $1 \le i  \le n$; 
and
\item [$(ii)$] $G$ and $G'$ can be properly two colored so that odd length paths $P_i$ and $P'_i$ 
either both start from a black vertex and end with a white vertex, or both start from 
a white vertex and end with a black vertex in  clockwise orientation along the peripheries of $G_i$ and $G'_i$ for $2 \le i  \le n$.
\end{itemize}
\end{theorem}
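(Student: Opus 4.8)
The plan is to prove the two implications separately; the forward one is immediate, and the reverse carries all the new content.

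\emph{Necessity.} If $R(G)\cong R(G')$, then Theorem \ref{glavni} already produces reducible face decompositions of $G$ and $G'$ satisfying (i), (ii) and (iii). Discarding (iii) gives exactly the asserted conclusion, so this direction is a restatement of Theorem \ref{glavni}.

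\emph{Sufficiency.} Assume the two decompositions satisfy (i) and (ii), and fix the proper $2$-colorings of $G$ and $G'$ furnished by (ii); these induce the resonance digraphs $\overrightarrow{R}(G)$ and $\overrightarrow{R}(G')$. I would prove, by induction on the number $n$ of inner faces, the stronger statement that $\overrightarrow{R}(G)\cong\overrightarrow{R}(G')$ as digraphs; forgetting orientations then yields $R(G)\cong R(G')$. The cases $n\le 2$ are checked directly. For the step, restrict the decompositions to $G_{n-1}$ and $G'_{n-1}$: these satisfy (i) and (ii) with $n-1$ faces, so the induction hypothesis supplies a digraph isomorphism $\phi_{n-1}\colon\overrightarrow{R}(G_{n-1})\to\overrightarrow{R}(G'_{n-1})$. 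By Theorem \ref{th0}, $R(G)=pce(R(G_{n-1});T)$ and $R(G')=pce(R(G'_{n-1});T')$, where $T=\langle\mathcal{M}(G_{n-1};e)\rangle$ and $T'=\langle\mathcal{M}(G'_{n-1};e')\rangle$ for the edges $e=uv$ and $e'=u'v'$ shared between $s_n,s'_n$ and their unique neighbours $s_j,s'_j$ given by Proposition \ref{P:OuterPlaneReducibleFace}.

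The crux, and the main obstacle, is to show $\phi_{n-1}(T)=T'$, since an arbitrary isomorphism need not respect these convex subgraphs. I would resolve it by describing $T$ intrinsically. For a $2$-connected outerplane bipartite graph each inner face names a single $\Theta$-class (Theorem \ref{T:Theta(R(G))-Isomorphic}), so $s_j$ corresponds to one $\Theta$-class $E_j$ of the median graph $R(G_{n-1})$, whose removal splits $R(G_{n-1})$ into two convex halves. In $G_{n-1}$ the edge $e$ lies only on the boundaries of $s_j$ and of the outer face, so crossing an edge of $R(G_{n-1})$ toggles the membership of $e$ in a perfect matching precisely when that edge has face-label $s_j$; hence $e$-membership is constant on each half, and $T$ equals one of them. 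To pin down which, I apply Lemma \ref{grozna}: by Proposition \ref{pro-ves} the outer cycle of $G_{n-1}$ is improper $M_{\hat{0}}$-alternating and proper $M_{\hat{1}}$-alternating, so the colors of $u,v$ decide whether $e\in M_{\hat{0}}$ or $e\in M_{\hat{1}}$, and $T$ is the half containing $M_{\hat{0}}$ (resp.\ $M_{\hat{1}}$). Property (ii) forces $u,u'$ and $v,v'$ to have equal colors, whence the same decision is reached for $e'$, i.e.\ $e\in M_{\hat{0}}$ iff $e'\in M_{\hat{0}}$. Since $\phi_{n-1}$ maps $\Theta$-classes to $\Theta$-classes and, by property (i), carries $E_j$ to the class $E'_j$ with label $s'_j$, and since a digraph isomorphism of Hasse diagrams preserves the lattice minimum and maximum, $\phi_{n-1}$ sends the $M_{\hat{0}}$-half (resp.\ $M_{\hat{1}}$-half) of $E_j$ to that of $E'_j$. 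Therefore $\phi_{n-1}(T)=T'$.

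Once $\phi_{n-1}(T)=T'$ is in hand, the peripheral convex expansion extends $\phi_{n-1}$ to a graph isomorphism $\phi\colon R(G)\to R(G')$, and it remains only to verify that $\phi$ respects orientations. This is a direct transcription of Claims~1 and~2 in the proof of Theorem \ref{glavni}: directed edges inside the base copy are handled by the induction hypothesis; each directed edge inside the expanded copy inherits its orientation from the antipodal base edge through the governing $4$-cycle; and the orientation of the new $\Theta$-class with face-label $s_n$ is fixed by Lemma \ref{grozna} together with property (ii), exactly as above. Hence $\phi$ is a digraph isomorphism, completing the induction and the proof.
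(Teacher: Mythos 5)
Your necessity direction coincides with the paper's (it is Theorem \ref{glavni} with property (iii) discarded), but your sufficiency argument takes a genuinely different route. The paper never attempts to control where an isomorphism sends $T=\langle\mathcal{M}(G_{n-1};uv)\rangle$: it reduces to showing the abstract isomorphism $\langle\mathcal{M}(G_{n-1};uv)\rangle\cong\langle\mathcal{M}(G'_{n-1};u'v')\rangle$, which it proves by realizing both as resonance graphs of auxiliary graphs $H_{n-1},H'_{n-1}$ (delete the endpoints of $uv$, resp.\ $u'v'$, then repeatedly delete endpoints of resulting pendant edges) and applying the induction hypothesis to their 2-connected components, with a Cartesian product argument when they are disconnected. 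You instead strengthen the induction to resonance digraphs and try to prove $\phi_{n-1}(T)=T'$ directly. Your intrinsic description of $T$ is correct: $T$ is one of the two halves of $R(G_{n-1})$ cut off by the $\Theta$-class $E_j$, and Proposition \ref{pro-ves}, Lemma \ref{grozna} and property (ii) do decide, consistently on both sides, whether it is the half containing the minimum or the one containing the maximum. You are also right that "$T\cong T'$" alone is not the same as "some isomorphism carries $T$ onto $T'$" --- a subtlety the paper itself treats rather lightly.

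However, there is a genuine gap at your decisive step: "since $\phi_{n-1}$ maps $\Theta$-classes to $\Theta$-classes and, by property (i), carries $E_j$ to the class $E'_j$". Your induction hypothesis asserts only that \emph{some} digraph isomorphism $\overrightarrow{R}(G_{n-1})\to\overrightarrow{R}(G'_{n-1})$ exists; property (i) is a statement about the face correspondence $s_i\mapsto s'_i$ and places no constraint on the inner-dual isomorphism that $\phi_{n-1}$ induces via Theorem \ref{T:Theta(R(G))-Isomorphic}. The two can disagree. Concretely, take $G_{n-1}=G'_{n-1}$ to be phenanthrene (the angular chain of three hexagons) with the identity correspondence. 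Its resonance digraph is the Hasse diagram of a five-element lattice consisting of a diamond with a pendant covering edge, and swapping the two incomparable middle elements is a nontrivial digraph automorphism; it interchanges the $\Theta$-classes labeled by the two terminal hexagons. The induction hypothesis is entitled to hand you exactly this automorphism as $\phi_{n-1}$, and then $\phi_{n-1}(E_j)\neq E'_j$, so $\phi_{n-1}(T)$ is a half of the wrong $\Theta$-class and your extension step collapses. The repair is to strengthen the induction statement once more: demand a digraph isomorphism whose induced map on $\Theta$-classes sends the class with face-label $s_i$ to the class with face-label $s'_i$ for all $i$ (the digraph analogue of property (i) in Theorem \ref{glavni}). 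With that hypothesis your half-matching argument does yield $\phi_{n-1}(T)=T'$, and you must then verify that the extended isomorphism again respects the labeling --- which follows from Theorem \ref{th0}, since the expansion creates exactly one new class, labeled $s_n\mapsto s'_n$, and only enlarges the old ones --- so that the strengthened induction closes. This compatibility is precisely what your proof needs and cannot get for free from the weaker statement you chose to induct on.
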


\begin{proof}
{\it Necessity.} This implication follows by Theorem \ref{glavni}.

{\it Sufficiency.} Let $G$ and $G'$ be 2-connected outerplane bipartite graphs each with $n$ inner faces. 
Use induction on $n$.
The result holds when $n=1$ or $2$.  Assume that $n \ge 3$ and the result holds for 
any two 2-connected outerplane bipartite graphs each with less than $n$ inner faces.
By Theorem \ref{T:OuterPlanePeripheryExpansionSequence},
 $R(G)$ can be obtained from an edge by a peripheral convex expansions  with respect to a reducible face decomposition $G_i (1 \le i \le n)$
associated with the face sequence
$s_i (1 \le i \le n)$ and the odd length path sequence $P_i (2 \le i  \le n)$;  and
$R(G')$ can be obtained from an edge by a peripheral convex expansions  with respect to a reducible face decomposition $G'_i (1 \le i \le n)$ 
associated with the face sequence $s'_i (1 \le i  \le n)$ and the odd length path sequence $P'_i (2 \le i  \le n)$.

{Assume that properties $(i)$ and $(ii)$ hold for the above reducible face decompositions of $G$ and $G'$.
By induction hypothesis, $R(G_{n-1})$ and $R'(G_{n-1})$ are isomorphic.}

Similarly to the argument in Theorem \ref{glavni},
we can see that $s_n$ is adjacent to exactly one inner face $s_j$ of $G$ such that $\partial s_n \cap \partial s_{j}$ is an edge $uv$ on $\partial G_{n-1}$;
and $s'_n$  is adjacent to exactly one inner face $s'_j$ of $G'$ such that $\partial s'_n \cap \partial s'_{j}$ is an edge $u'v'$ on $\partial G'_{n-1}$.
It is clear that $u$ and $v$ (resp., $u'$ and $v'$) are two end vertices of $P_n$ (resp., $P'_n$).
Moreover,  
$R(G)=pce(R(G_{n-1}), \langle\mathcal{M}(G_{n-1}; uv)\rangle)$,
and $R(G')=pce(R(G'_{n-1}), \langle\mathcal{M}(G'_{n-1}; u'v')\rangle)$. 
To show that $R(G)$ and $R(G')$ are isomorphic, it remains to prove that $\langle\mathcal{M}(G_{n-1}; uv)\rangle$
and $\langle\mathcal{M}(G'_{n-1}; u'v')\rangle$ are isomorphic.

Let $H_{n-1}$ be the subgraph of $G_{n-1}$ obtained by removing two end vertices of the edge $uv$, 
and repeatedly removing end vertices of resulted pendant edges during the process,
and $H'_{n-1}$ be the subgraph of $G'_{n-1}$ obtained by removing two end vertices of the edge $u'v'$, 
and repeatedly removing end vertices of resulted pendant edges during the process.
Note that all vertices of $G_{n-1}$ (resp., $G'_{n-1}$)
are on the outer cycle of $G_{n-1}$ (resp., $G'_{n-1}$) since $G_{n-1}$ (resp., $G'_{n-1}$) is an outerplane graph.  
Then all resulted pendant edges during the process of obtaining $H_{n-1}$ (resp., $H'_{n-1}$) 
from $G_{n-1}$ (resp., $G'_{n-1}$) are the edges
on the outer cycle of $G_{n-1}$ (resp., $G'_{n-1}$).
It follows  either both $H_{n-1}$ and $H'_{n-1}$ are empty,  
or $H_{n-1}$ and $H'_{n-1}$ are connected subgraphs of $G_{n-1}$ and $G'_{n-1}$, respectively.
Moreover, if both $H_{n-1}$ and $H'_{n-1}$ are empty, then $\mathcal{M}(G_{n-1}; uv )$ contains
a unique perfect matching of $G_{n-1}$ and $\mathcal{M}(G'_{n-1}; u'v' )$ contains a unique perfect matching of $G'_{n-1}$. 
So, $\langle \mathcal{M}(G_{n-1}; uv )\rangle$
and $ \langle \mathcal{M}(G'_{n-1}; u'v') \rangle$ are isomorphic as single vertices.

We now assume that $H_{n-1}$ and $H'_{n-1}$ are connected subgraphs of $G_{n-1}$ and $G'_{n-1}$, respectively. 
It is clear that all resulted pendant edges during the process of obtaining $H_{n-1}$ from $G$ are the edges
 of each perfect matching in $\mathcal{M}(G_{n-1}; uv )$, and so each perfect matching of 
 $H_{n-1}$ can be extended uniquely to a perfect matching  in  $\mathcal{M}(G_{n-1}; uv )$.
Hence, there is a 1-1 correspondence between the set of perfect matchings of  $H_{n-1}$ and the set of perfect matchings  in 
 $\mathcal{M}(G_{n-1}; uv)$.
 Two perfect matchings of  $H_{n-1}$ are adjacent in  $R(H_{n-1})$ if and only if the corresponding two perfect matchings  in 
 $\mathcal{M}(G_{n-1}; uv )$ are adjacent in $\langle \mathcal{M}(G_{n-1}; uv) \rangle$.
Therefore, $R(H_{n-1})$  is isomorphic to $\langle \mathcal{M}(G_{n-1}; uv)\rangle$.
Similarly, $R(H'_{n-1})$ is isomorphic to $\langle \mathcal{M}(G'_{n-1}; u' v') \rangle$.

{Next, we show that $R(H_{n-1})$  and $R(H'_{n-1})$ are isomorphic.
Note that $G_{n-1}$ and $G'_{n-1}$ have reducible face decompositions satisfying properties $(i)$ and $(ii)$.
By the constructions of $H_{n-1}$ and $H'_{n-1}$, we can distinguish two cases based on
whether $H_{n-1}$ and $H'_{n-1}$ are 2-connected or not.

\textit{Case 1.} $H_{n-1}$ and $H'_{n-1}$ are 2-connected. 
Then by their constructions, $H_{n-1}$ and $H'_{n-1}$ have reducible face decompositions satisfying properties $(i)$ and $(ii)$.
 Then $R(H_{n-1})$  and $R(H'_{n-1})$ are isomorphic by induction hypothesis.

\textit{Case 2.}  Each of $H_{n-1}$ and $H'_{n-1}$ has more than one 2-connected component. 
Note that any  2-connected component of $H_{n-1}$ and $H'_{n-1}$ is 
a 2-connected outerplane bipartite graph.
This implies that any bridge of $H_{n-1}$ (resp., $H'_{n-1}$)  cannot 
belong to any perfect matching of
$H_{n-1}$ (resp., $H'_{n-1}$). 
Hence, any perfect matching of $H_{n-1}$ (resp., $H'_{n-1}$)  is the perfect matching of the union of its 2-connected components.
It follows that $R(H_{n-1})$ (resp., $R(H'_{n-1})$) is  the Cartesian product of resonance graphs of its  2-connected components.
By the constructions of $H_{n-1}$ and $H'_{n-1}$, there is a 1-1 corresponding between the set of 2-connected components of $H_{n-1}$ 
and the set of 2-connected components of $H'_{n-1}$ 
such that each 2-connected component of $H_{n-1}$ and its corresponding 2-connected component of $H'_{n-1}$
have reducible face decompositions satisfying properties $(i)$ and $(ii)$.
Then $R(H_{n-1})$  and $R(H'_{n-1})$ are isomorphic by induction hypothesis.}

We have shown that $R(H_{n-1})$  is isomorphic to $\langle \mathcal{M}(G_{n-1}; uv )\rangle$,
 and $R(H'_{n-1})$ is isomorphic to $\langle\mathcal{M}(G'_{n-1}; u'v' )\rangle$.
Therefore, $\langle \mathcal{M}(G_{n-1}; uv )\rangle$
and $\langle \mathcal{M}(G'_{n-1}; u' v')\rangle$ are isomorphic.
It follows that $R(G)$ and $R(G')$ are isomorphic. \qed

\end{proof}

\bigskip

%Note that the sufficiency part can be also proved by using the concept of evenly homeomorphic 2-connected outerplane bipartite graphs from \cite{br-tr-zi-2}.

Finally, we can formulate the main result  using local structures of given graphs. 
 Let $e$ and $f$ be two edges of a graph $G$. Let $d_G(e,f)$ denote the distance between corresponding vertices in the line graph $L(G)$ of $G$.
The following concepts introduced in \cite{br-tr-zi-2} will be also needed for that purpose.
\begin{definition} \cite{br-tr-zi-2}
Let $G$ be a 2-connected outerplane bipartite graph and $s$, $s'$, $s''$ be three inner faces of  $G$. 
Then the triple $(s,s',s'')$ is called an \textit{adjacent triple of inner faces} if $s$ and $s'$ have the common edge $e$ and $s',s''$ have the common edge $f$.  
The adjacent triple of  inner faces $(s,s',s'')$ is \textit{regular} if the distance $d_G(e,f)$ is an even number, and \textit{irregular} otherwise. 
\end{definition}

It is easy to see that 2-connected outerplane bipartite graphs $G$ and $G'$ have  reducible face decompositions satisfying properties $(i)$ and $(ii)$ 
 if and only if there exists an isomorphism between the inner dual of $G$ and $G'$ that preserves the (ir)regularity of adjacent triples of inner faces. Therefore, the next result follows directly from Theorem \ref{glavni1}. 

\begin{corollary} \label{cor2}
Let $G$ and $G'$ be two 2-connected outerplane bipartite graphs with inner duals $T$ and $T'$, respectively. Then $G$ and $G'$ have isomorphic resonance graphs if and only if there exists an isomorphism $\alpha: V(T) \rightarrow V(T')$ such that for any 3-path $xyz$ of $T$: the adjacent triple  $(x,y,z)$ of inner faces of $G$ is regular if and only if  the adjacent triple $(\alpha(x),\alpha(y),\alpha(z))$ of inner faces  of $G'$ is regular.
\end{corollary}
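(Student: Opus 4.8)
The plan is to deduce the corollary from Theorem~\ref{glavni1} by verifying the equivalence stated just above it: two 2-connected outerplane bipartite graphs $G$ and $G'$ admit reducible face decompositions satisfying properties $(i)$ and $(ii)$ if and only if their inner duals $T$ and $T'$ admit an isomorphism $\alpha$ preserving the (ir)regularity of every adjacent triple. Since Theorem~\ref{glavni1} identifies the former condition with $R(G)\cong R(G')$, proving this equivalence suffices. Property $(i)$ already furnishes the inner dual isomorphism $\alpha$ through $s_i\mapsto s'_i$, and a $3$-path $xyz$ of $T$ is precisely an adjacent triple of inner faces with middle face $y$; so the whole task reduces to matching property $(ii)$ against the regularity condition.

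The first step is to build a color dictionary along the reducible face decomposition. Using the bijection between the ears $P_i\,(2\le i\le n)$ and the edges of the tree $T$ --- where $P_i$ introduces the face $s_i$ together with the single edge $e_i$ it shares with its unique earlier neighbour --- I would assign to each tree-edge the color-bit $p(e_i)$ equal to the clockwise start-color of $P_i$. Because $P_i$ has odd length its endpoints have opposite colors, and a short orientation computation shows that this start-color is exactly the color of the clockwise-first endpoint of $e_i$ read along the boundary of the newly added face $s_i$. Two proper colorings of $G$ differ by a global interchange of black and white, which flips every $p(e_i)$ simultaneously; hence property $(ii)$ says precisely that the pattern $p$ of $G$ and the pattern $p'$ of $G'$ agree, after transport by $\alpha$, up to one global complement.

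The second step is to express regularity through $p$ and then run a gauge argument. For an adjacent triple with middle face $s'$ and defining edges $e,f\subseteq\partial s'$, regularity means $d_G(e,f)$ is even; as in \cite{br-tr-zi-2} this is equivalent to $e$ and $f$ having the same clockwise black-to-white sense on the even cycle $\partial s'$, that sense alternating from edge to edge. Translating ``sense on $\partial s'$'' back to the pattern is a matter of signs: for an edge whose later endpoint is $s'$ this sense equals $p$, while for an edge whose later endpoint is the other face it equals the complement of $p$, because clockwise traversals of the two faces sharing an edge are opposite. Thus each regular or irregular triple becomes a same-or-opposite relation between two values of $p$ on tree-edges sharing a vertex of $T$, i.e.\ on two adjacent vertices of the line graph $L(T)$. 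Since $L(T)$ is connected whenever $T$ has at least two edges, the full system of these relations determines $p$ up to a single global complement; therefore the regularity data of $G$ and $G'$ coincide under $\alpha$ exactly when $p$ and $p'$ agree up to a global complement, which is property $(ii)$. Combined with property $(i)$ this closes the equivalence and yields the corollary.

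I expect the main obstacle to be the bookkeeping in the color dictionary, rather than the concluding gauge argument, which is routine once the relations are in place. Concretely, one must check uniformly across all ears that the clockwise start-color of $P_i$ records the clockwise sense of $e_i$ on the correct incident face, and then track the complementations correctly when rewriting each regularity relation in terms of $p$, so that the single global-recoloring freedom in $(ii)$ matches exactly the single global-complement ambiguity produced by the connectivity of $L(T)$.
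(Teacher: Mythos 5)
Your proposal is correct and takes essentially the same route as the paper: both deduce the corollary directly from Theorem~\ref{glavni1} by identifying properties $(i)$--$(ii)$ of that theorem with the existence of an inner-dual isomorphism preserving (ir)regularity of adjacent triples. The paper simply declares this identification ``easy to see,'' whereas you actually supply the supporting argument (the ear-start-color dictionary for the tree edges and the gauge argument using connectivity of $L(T)$), and that argument is sound, up to harmless orientation-convention bookkeeping that you yourself flag.
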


Since it would be interesting to generalize the presented results to a wider family of graphs (for example plane elementary bipartite graphs), we conclude the paper with the following open problem.
\bigskip

\noindent
\textbf{Problem 2.} \textit{Characterize plane (elementary) bipartite graphs   with isomorphic resonance graphs.}
\bigskip
\bigskip

\noindent{\bf Acknowledgment:} Simon Brezovnik, Niko Tratnik, and Petra \v Zigert Pleter\v sek acknowledge the financial support from the Slovenian Research Agency: research programme No.\ P1-0297 (Simon Brezovnik, Niko Tratnik, Petra \v Zigert Pleter\v sek), project No.\ N1-0285 (Niko Tratnik), and project No.\ NK-0001 (Petra \v Zigert Pleter\v sek).  All
four authors thank the Slovenian Research Agency for financing our bilateral project between Slovenia and the USA (title: \textit{Structural properties of resonance graphs and related concepts}, project No. BI-US/22-24-158).
  
%The author would like to thank the referees for their helpful comments.


\begin{thebibliography}{99}
%\bibitem{B73}
%G. Birkhoff, 
%{\it Lattice Theory}, 3rd ed., Amer. Math. Soc. Colloq. Publ., vol. 25, 1973.

\bibitem{br-tr-zi} S. Brezovnik, N. Tratnik, P. \v Zigert Pleter\v sek, Resonantly equivalent catacondensed even ring systems, {MATCH Commun. Math. Comput. Chem.} {82} (2019) 625--638.
 
  \bibitem{br-tr-zi-1} S. Brezovnik, N. Tratnik, P. \v Zigert Pleter\v sek, Resonance graphs of catacondensed even ring systems, {Appl. Math. Comput.} {374} (2020) 125064.

\bibitem{br-tr-zi-2} S. Brezovnik, N. Tratnik, P. \v Zigert Pleter\v sek, Resonance graphs and a binary coding of perfect matchings of outerplane bipartite graphs, {MATCH Commun. Math. Comput. Chem.} {90} (2023) 453--468.

\bibitem{C18}
Z. Che,  Structural properties of resonance graphs of plane elementary bipartite graphs, 
{ Discrete Appl. Math.} {247} (2018) 102--110.

\bibitem{C19}
Z. Che, Characterizations of the resonance graph of an outerplane bipartite graph,
{ Discrete Appl. Math.} {258} (2019) 264--268.

\bibitem{C20} Z. Che, Cube-free resonance graphs, {Discrete Appl. Math.} {284} (2020) 262--268.

\bibitem{C21} Z. Che, Peripheral convex expansions of resonance graphs,
Order {38} (2021) 365-376.  


%\bibitem{CC06}
%Z. Che, Z. Chen, Forcing hexagons in hexagonal systems, 
%{\it MATCH Commun. Math. Comput. Chem.} {\bf 56} (2006), 649--668.

%\bibitem{CC08}
%Z. Che and Z. Chen,  
%Forcing faces in plane bipartite graphs,
%{\it  Discrete Math.} {\bf 308} (2008) 2427--2439. 

%\bibitem{CC13}
%Z. Che, Z. Chen, 
%Forcing faces in plane bipartite graphs (II), 
%{ Discrete Appl. Math.}  { 161} (2013) 71--80. 
%
%\bibitem{CT14}
%M. Cohen, M. Teicher, 
%Kauffman's clock lattice as a graph of perfect matchings: a formula for its height, 
%{ Electron. J. Combin.} { 21} (2014), Paper 4.31, pp. 39.

\bibitem{el-basil-93/1}
S.~El-Basil, Kekul\'e structures as graph generators, 
{ J. Math. Chem.} {14} (1993) 305--318.

\bibitem{el-basil-93/2}
S.~El-Basil, Generation of lattice graphs. An equivalence relation  on Kekul\'e counts of catacondensed benzenoid hydrocarbons,
{ J. Mol. Struct.(Theochem)} {288} (1993) 67--84.

%\bibitem{F03}
%J. C. Fournier, 
%Combinatorics of perfect matchings in plane bipartite graphs and application to tilings, 
%{ Theoret. Comput. Sci.} { 303} (2003) 333--351.

\bibitem{grundler-82}
W.~Gr{\"u}ndler, Signifikante Elektronenstrukturen fur benzenoide  Kohlenwasserstoffe, 
{  Wiss. Z. Univ. Halle} {31} (1982) 97--116.

%\bibitem{HIK11}
%R. Hammack, W.  Imrich, S. Klav\v{z}ar,  
%{\it Handbook of product graphs.}  Second edition.  Discrete Mathematics and its Applications (Boca Raton). CRC Press, Boca Raton, FL, 2011.

%\bibitem{KVZ03}
%S. Klav\v{z}ar, A. Vesel, P. \v{Z}igert, 
%On resonance graphs of catacondensed hexagonal graphs: Structure, coding, and Hamiltonian path algorithm, 
%{\it MATCH Commun. Math. Comput. Chem.} {\bf 49} (2003) 99--116. 

%\bibitem{KZB02}
%S. Klav\v{z}ar, P. \v{Z}igert, G. Brinkmann, 
%Resonance graphs of catacondensed even ring systems are median, 
%{\it Discrete Math.} {\bf 253} (2002) 35--43. 

\bibitem{KZ05} S. Klav\v zar, P. \v Zigert Pleter\v sek, Fibonacci cubes are the resonance graphs of fibonaccenes, Fibonacci Q. 43 (2005) 269--276.

%\bibitem{LP86}
%L. Lovasz, M. D. Plummer, {\it Matching theory}, North-Holland
%Mathematics Studies, 121. Annals of Discrete Mathematics, Vol 29,
%North-Holland Publishing Co., Amsterdam, 1986.

\bibitem{LZ03}
P. C. B. Lam, H. Zhang, 
A distributive lattice on the set of perfect matchings of a plane bipartite graph, 
{Order} {20} (2003) 13--29. 

\bibitem{TV12}
A. Taranenko, A. Vesel, 
1-Factors and characterization of reducible faces of plane elementary bipartite graphs,
{Discuss. Math. Graph Theory} {32} (2012) 289--297.

%\bibitem{V05}
%A. Vesel, 
%Characterization of resonance graphs of catacondensed hexagonal graphs,
%{ MATCH Commun. Math. Comput. Chem.} { 53} (2005) 195--208. 

\bibitem{Z06}
H. Zhang, 
$Z$-transformation graphs of perfect matchings of plane bipartite graphs: a survey,
{ MATCH Commun. Math. Comput. Chem.} {56} (2006) 457--476.

%\bibitem{Z10}
%H. Zhang, 
%Direct sum of distributive lattices on the perfect matchings of a plane bipartite graph,
%{\it Order} {\bf 27} (2010) 101--113. 

%\bibitem{ZC91}
%F. Zhang and R. Chen,
%When each hexagon of a hexagonal system covers it, 
%{\it Discrete Appl. Math.} {\bf 30} (1991) 63--75.


\bibitem{zhgu-88}
F. Zhang, X. Guo, R. Chen, $Z$-transformation graph of perfect matching of hexagonal systems, 
{ Discrete Math.} {72} (1988) 405--415. 

\bibitem{ZLS08}
H. Zhang, P. C. B. Lam, W. C. Shiu, 
Resonance graphs and a binary coding for the 1-factors of benzenoid systems, 
{ SIAM J. Discrete Math.} {22} (2008) 971--984.


%\bibitem{ZOY09}
%H. Zhang, L. Ou and H. Yao,
%Fibonacci-like cubes as $Z$-transformation graphs,
%{\it Discrete Math.} {\bf 309} (2009)1284--1293. 


%\bibitem{ZYY14}
%H. Zhang, D. Yang and  H. Yao,
%Decomposition theorem on matchable distributive lattices,
%{\it Discrete Appl. Math. } {\bf 166} (2014) 239--248. 

\bibitem{ZZ97}
H. Zhang, F. Zhang, 
The rotation graphs of perfect matchings of plane bipartite graphs,
{ Discrete Appl. Math.} {73} (1997) 5--12. 

\bibitem{ZZ00}
H. Zhang, F. Zhang, 
Plane elementary bipartite graphs,
{ Discrete Appl. Math.}  {105}  (2000) 291--311.

\bibitem{ZZY04}
H. Zhang, F. Zhang, H. Yao, 
$Z$-transformation graphs of perfect matchings of plane bipartite graphs, 
{ Discrete Math.} {276}  (2004) 393--404.
\end{thebibliography}
\end{document}